\documentclass{article}
\usepackage[letterpaper, margin=1in]{geometry}
\usepackage{amsmath}
\usepackage{faktor}
\usepackage{amssymb}
\usepackage{listings}
\usepackage{courier}
\usepackage{mdframed}
\lstset{language=Matlab, basicstyle=\ttfamily}
\usepackage{fancyhdr}
\usepackage{enumitem}
\usepackage{graphicx}
\graphicspath{{images/}}
\usepackage[utf8]{inputenc}
\usepackage[english]{babel}
\usepackage{amsthm}
\usepackage[T1]{fontenc}

\theoremstyle{plain}
\newtheorem{thm}{Theorem}[section]
\newtheorem{lem}[thm]{Lemma}
\newtheorem{prop}[thm]{Proposition}
\newtheorem{cor}[thm]{Corollary}

\theoremstyle{defn}
\newtheorem{defn}[thm]{Definition}

\theoremstyle{remark}

\usepackage{mathabx}
\usepackage{color}

\usepackage{color,soul}
\usepackage{enumitem}
 
\pagestyle{fancy}
\fancyhf{}
\rhead{Analytic Topology}
\lhead{MFoCS}
\rfoot{Page \thepage}

\begin{document}

\begin{titlepage}
    \begin{center}
        \vspace*{1cm}
        \Huge
        
        \textbf{A Report on Hausdorff Compactifications of $\mathbb{R}$}
        \vspace{3.0cm}
        \Large
        
        \textbf{Arnold Tan Junhan} 
        \vspace{2.0cm}
        \Large
        
        \textbf{Michaelmas 2018 Mini Projects: Analytic Topology}
        \vfill
        \vspace{0.8cm}
        \Large
        University of Oxford   
    \end{center}
\end{titlepage}
\tableofcontents
\section{Abstract}

The goal of this report is to investigate the variety of Hausdorff compactifications of $\mathbb{R}$. The Alexandroff one-point compactification, the two-point compactification $[-\infty,\infty]$, and the Stone-$\check{\text{C}}$ech compactification are all clearly different. The ultimate aim is to show that there are in fact uncountably many. An intermediate aim is to exhibit one compactification of $\mathbb{R}$ different from all the compactifications already mentioned.
\\

\noindent We will often just write $\delta X$ to refer to a compactification $\langle l, \delta X \rangle$ of a space $X$. We will compare two $T_2$ compactifications of a space $X$ by writing $\langle l_1, \delta_1 X \rangle \leq \langle l_2, \delta_2 X \rangle$ to mean that there is a continuous function $L: \delta_2 X \rightarrow \delta_1 X$ such that $L \circ l_2 = l_1$. (Such a function will automatically be onto.) It is not hard to see that if $\delta_1 X \leq \delta_2 X$ and $\delta_2 X \leq \delta_1 X$ then $\delta_1 X$ and $\delta_2 X$ are homeomorphic as topological spaces.
\\

\noindent Let us declare two compactifications $\langle l_1, \delta_1 X \rangle$ and $\langle l_2, \delta_2 X \rangle$ to be \textit{equivalent} if $\delta_1 X \leq \delta_2 X$ and $\delta_2 X \leq \delta_1 X$. Then $\leq$ gives us a partial ordering on the set of equivalence classes of compactifications. This will be useful for us towards the end of the report, where we shall apply Zorn's Lemma to this poset of equivalence classes.
\\

\noindent For that purpose, let us also recall here that an element $p \in P$ of a poset $(P, \leq)$ is \textit{maximal} if whenever we have $q \in P$ with $p \leq q$, then $p=q$. (When the equivalence class of a compactification is maximal -- with respect to $\leq$, among all compactifications with some given property -- we will simply say the compactification is maximal.) On the other hand $p \in P$ is a \textit{greatest} element if $q \leq p$ for all $q \in P$. Writing $p < q$ to mean $p \leq q$, $p \neq q$ (and writing $p \nless q$ otherwise), we see that $p$ is maximal iff $p \nless q$ for all $q \in P$. A greatest element in a poset is unique and certainly maximal, however we may have several different maximal elements. A \textit{chain}, or \textit{linearly ordered set}, is a poset $(P, \leq)$ in which we have comparability of elements: for all $p,q \in P$, either $p \leq q$ or $q \leq p$. In a chain, the notions of maximal and greatest element do coincide.
\\

\section{Compactifications via their characterising properties}
The reader is surely familiar with the idea that the essence of the \textit{Stone-$\check{\text{C}}$ech compactification} $ \langle h, \beta X \rangle$ can be captured via a certain characterizing property. We run through the steps of showing this, and then, borrowing some of these ideas, we will exhibit a compactification of $\mathbb{R}$ that turns out to be different from $ \langle h, \beta X \rangle$.

\begin{defn}
Let $X$ be a (nonempty) Tychonoff space.
\\ Let $\{f_\lambda: \lambda \in \Lambda \}$ be a list of all bounded continuous functions from $X$ to $\mathbb{R}$.
\\ For each $\lambda$, let $I_\lambda$ be the smallest closed interval such that $ran(f_\lambda) \subseteq I\lambda$. That is, let $I_\lambda=[\text{inf ran}(f_\lambda),\text{sup ran}(f_\lambda)]$.
\\ Let $Y = \prod_{\lambda \in \Lambda} I_\lambda$ be the Tychonoff product of the $I_\lambda$.
\\ Define $h: X \rightarrow Y$ such that for each $\lambda \in \Lambda$, $h(x)(\lambda) = f_\lambda (x)$. Let $\beta X = \text{cl}^Y(h(X))$.
\\ Define the \textbf{Stone-$\check{\text{C}}$ech compactification} of $X$ to be $ \langle h, \beta X \rangle$.
\end{defn}

\noindent Let us briefly check that this is indeed a $T_2$ compactification of $X$:
\begin{itemize}
    \item $Y = \prod_{\lambda \in \Lambda} I_\lambda$, which is compact (by Tychonoff's Theorem) and $T_2$, since this is true for each of the $I_\lambda$. Therefore, since $\beta X$ is a subspace of $Y$, it is $T_2$; since it is closed in $Y$, it is compact.
    \item $h$ is injective. Suppose we have distinct points $x, y \in X$. Since $X$ is Tychonoff (and hence $\{y\}$ is closed), there is a continuous function $f: X \rightarrow [0,1]$ such that $f(x)=0, f(\{y\})=\{1\}$. $f$ is bounded, so there is $\lambda \in \Lambda$ with $f=f_\lambda$. Then, $h(x)(\lambda)=f_\lambda (x) = 0 \neq 1 = f_\lambda (y) = h(y)(\lambda) $, so $h(x) \neq h(y)$;
    \item $h$ is continuous. A subbasic open set in $Y$ has the form $U_\lambda \times \prod _{\mu \neq \lambda} I_\mu$, where $U_\lambda$ is open in $I_\lambda$. Set $U = (U_\lambda \times \prod _{\mu \neq \lambda} I_\mu )\cap \beta X$, then
$$h ^{-1} (U) = \{x \in X: h (x) \in U\}= \{x \in X: h (x)(\lambda) \in U_\lambda \}= \{x \in X: f_\lambda (x) \in U_\lambda \} =  f_\lambda ^{-1} (U_\lambda),$$
and this is open, since $f_\lambda$ is continuous;
    \item $h^{-1}$ is continuous. It is enough to see that whenever $x \in U$, where $U$ is open in $h(X)$, there is an open $V \ni h(x)$ in $h(X)$ such that $h^{-1} (V) \subseteq U$. Well, since $x \notin X \backslash U$, which is closed, and $X$ is Tychonoff, there is some continuous function $f : X \rightarrow [0,1]$ such that $f(x) = 0$ and $f(X \backslash U) = \{1\}$. $f$ is a bounded continuous function from $X$ to $\mathbb{R}$, so there is $\lambda \in \Lambda$ with $f = f_\lambda$. Hence $f_\lambda (x) = 0$ and $f_\lambda (X \backslash U) = \{1\}$.
    \\ Note that $V_\lambda := [0,1)$ is open in $[0,1] = I_\lambda$, so $V := (V_\lambda \times \prod_{\mu \neq \lambda} I_\mu) \cap h(X)$ is open in $h(X)$. We have
    $$h^{-1} (V) = \{x \in X: h(x) \in V\} = \{ x \in X: h(x) (\lambda) \in V_\lambda \} = \{x \in X: f_\lambda (x) \in [0,1)\} = f_\lambda ^{-1} [0,1),$$
    but $x \in f_\lambda ^{-1} [0,1) \subseteq U$, so $x \in h^{-1} (V) \subseteq U$;
    \item $\text{cl}^{\beta X} (h(X)) = \beta X$ holds. $\beta X$ is the smallest closed set in $Y$ containing $h(X)$, so it is the smallest closed set in $\beta X$ containing $h(X)$, because $\beta X$ is closed in $Y$ by construction.
\end{itemize}

\begin{lem} \label{sc1}
Let $X$ be a Tychonoff space, and $I$ be a closed bounded interval in $\mathbb{R}$. Let $f: X \rightarrow I$ be continuous.
Then there exists a continuous function $\beta f : \beta X \rightarrow I$ such that $\beta f \circ h = f$.
\end{lem}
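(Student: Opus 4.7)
My plan is to produce $\beta f$ as (a corestriction of) the projection from the product $Y$ onto one of its factors, namely the factor corresponding to the index of $f$ in the enumeration $\{f_\lambda\}_{\lambda\in\Lambda}$.

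First I would observe that since $I$ is a closed bounded interval, $f$ is a bounded continuous function from $X$ to $\mathbb{R}$, so $f = f_\lambda$ for some $\lambda \in \Lambda$. By the definition of $I_\lambda$ as the smallest closed interval containing $\mathrm{ran}(f_\lambda)$, and since $I$ is a closed interval containing $\mathrm{ran}(f_\lambda) = \mathrm{ran}(f)$, we have $I_\lambda \subseteq I$.

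Next I would define $\beta f : \beta X \to I$ to be the restriction of the projection map $\pi_\lambda : Y \to I_\lambda$ to $\beta X$, composed with the inclusion $I_\lambda \hookrightarrow I$. That is, $\beta f(y) = y(\lambda)$ for $y \in \beta X$. This is continuous because each projection out of a Tychonoff product is continuous, restrictions of continuous maps are continuous, and the inclusion $I_\lambda \hookrightarrow I$ is continuous.

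Finally, verifying $\beta f \circ h = f$ is a direct unwinding of the definitions: for $x \in X$,
\[
(\beta f \circ h)(x) = \beta f(h(x)) = h(x)(\lambda) = f_\lambda(x) = f(x).
\]
There is no real obstacle here: the whole point of building $\beta X$ as the closure of the image of $X$ inside a product indexed by \emph{all} bounded continuous real-valued functions is precisely to make each such $f$ factor through a coordinate projection. The only thing one has to be slightly careful about is that the codomain is $I$ rather than $I_\lambda$, which is handled by the inclusion $I_\lambda \subseteq I$ noted above.
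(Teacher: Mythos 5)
Your proposal is correct and follows essentially the same route as the paper's own proof: identify $f$ with some $f_\lambda$, take $\beta f(y) = y(\lambda)$ as a coordinate projection, and verify $\beta f \circ h = f$ by unwinding definitions. You are in fact slightly more careful than the paper in noting $I_\lambda \subseteq I$ to justify the codomain, which the paper leaves implicit.
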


\begin{proof}
$f$ is bounded and continuous, so there is some $\lambda \in \Lambda$ such that $f=f_\lambda$.
\\ Define $\beta f : \beta X \rightarrow I, y \mapsto y(\lambda)$. This is a projection, so it is continuous.
\\ Furthermore, for all $x \in X$, we have $\beta f \circ h (x) = h(x) (\lambda)= f_\lambda (x)= f(x)$.
\end{proof}

\begin{lem} \label{sc2}
Let $X$ be a Tychonoff space, and $Z = \prod_{\mu \in M} I_\mu$ be a product of closed bounded intervals in $\mathbb{R}$. Let $f: X \rightarrow Z$ be continuous.
Then there exists a continuous function $\beta f : \beta X \rightarrow Z$ such that $\beta f \circ h = f$.
\end{lem}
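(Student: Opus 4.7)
The plan is to reduce the product case to the single-interval case of Lemma \ref{sc1} by working coordinatewise and then reassembling using the universal property of the product topology.

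First I would fix, for each $\mu \in M$, the projection $\pi_\mu: Z \to I_\mu$, which is continuous by definition of the product topology. Composing, I set $f_\mu := \pi_\mu \circ f : X \to I_\mu$; each $f_\mu$ is continuous as a composition of continuous maps, and its codomain is a closed bounded interval. So Lemma \ref{sc1} applies to each $f_\mu$, furnishing a continuous extension $\beta f_\mu: \beta X \to I_\mu$ with $\beta f_\mu \circ h = f_\mu$.

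Next I would assemble these coordinate extensions into a single map $\beta f: \beta X \to Z$ defined by $(\beta f(y))(\mu) = \beta f_\mu(y)$ for each $y \in \beta X$ and each $\mu \in M$. The key point is that by the universal property of the Tychonoff product, a function into $Z = \prod_{\mu \in M} I_\mu$ is continuous iff each of its coordinate maps $\pi_\mu \circ \beta f = \beta f_\mu$ is continuous; since each $\beta f_\mu$ is continuous, $\beta f$ is continuous.

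Finally I would verify $\beta f \circ h = f$ by comparing the two functions coordinatewise: for any $x \in X$ and $\mu \in M$,
$$(\beta f \circ h)(x)(\mu) = \beta f_\mu(h(x)) = f_\mu(x) = (\pi_\mu \circ f)(x) = f(x)(\mu),$$
so the two agree on every coordinate and are therefore equal as maps into $Z$. There is no real obstacle here beyond remembering to invoke the universal property of the product; the work of Lemma \ref{sc1} (which in turn rested on the construction of $\beta X$ itself as a subspace of a product of intervals) has already done all the heavy lifting.
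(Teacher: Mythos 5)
Your proposal is correct and follows essentially the same route as the paper: decompose $f$ into its coordinate maps $\pi_\mu \circ f$, extend each via Lemma \ref{sc1}, and reassemble coordinatewise. The only cosmetic difference is that you cite the universal property of the product for continuity of $\beta f$, whereas the paper verifies it directly on subbasic open sets --- these are the same fact.
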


\begin{proof}
Define $f^\mu : X \rightarrow I_\mu, x \mapsto f(x)(\mu)$. $f^\mu = \pi_\mu \circ f$, so it is continuous.
Apply Lemma \ref{sc1} to see that there exists a continuous function $\beta f^\mu : \beta X \rightarrow I_\mu$ such that $\beta f^\mu \circ h = f^\mu$.
\\ Now define $\beta f : \beta X \rightarrow Z$ such that for all $\mu$, $\beta f (x) (\mu) = \beta f^\mu (x)$. That is, $\pi_\mu \circ \beta f = \beta f^\mu$.
\\ It remains to see that $\beta f$ is continuous.
\\ A subbasic open set in $Z$ has the form $U=U_\mu \times \prod _{\nu \neq \mu} I_\nu$, where $U_\mu$ is open in $I_\mu$. We have
$$\beta f ^{-1} (U) = \{x \in \beta X: \beta f (x) \in U\}= \{x \in \beta X: \beta f (x)(\mu) \in U_\mu \}= \{x \in \beta X: \beta f^\mu (x) \in U_\mu \} = (\beta f^\mu) ^{-1} (U_\mu).$$
This is open, since $\beta f^\mu$ is continuous.
\end{proof}

\begin{lem} \label{embed}
Any Tychonoff space $X$ can be embedded in a product of closed bounded intervals.
\end{lem}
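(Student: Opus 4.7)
The plan is to reuse the construction of $\beta X$ that was set up in the definition: the map $h: X \to Y = \prod_{\lambda \in \Lambda} I_\lambda$ defined by $h(x)(\lambda) = f_\lambda(x)$ sends $X$ into a product of closed bounded intervals, so it suffices to show that $h$ is an embedding (a homeomorphism onto its image).

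First I would observe that essentially all of the work has already been done in the verification that $\langle h, \beta X \rangle$ is a $T_2$ compactification. Namely, the bullet points above established that $h$ is injective (using the separation of points from closed singletons by a bounded continuous $[0,1]$-valued function, which is one of the $f_\lambda$), that $h$ is continuous (via preimages of subbasic opens of $Y$ being of the form $f_\lambda^{-1}(U_\lambda)$), and that $h^{-1}: h(X) \to X$ is continuous (again by invoking the Tychonoff property to separate a point from the complement of any open set by some $f_\lambda$, and pulling back a half-open subbasic neighbourhood in $I_\lambda = [0,1]$). So the steps are simply to cite each of these facts from the construction of $\beta X$ and conclude that $h$ is a homeomorphism onto $h(X) \subseteq Y$.

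There is no real obstacle — the only thing to notice is that nowhere in the verification of those three properties was the Tychonoff product $Y$ required to be compact, nor was $\beta X$ itself used; the properties of $h$ as an embedding of $X$ into $Y$ follow just from $X$ being Tychonoff and $\{f_\lambda\}$ enumerating the bounded continuous real-valued functions. Hence the lemma is a direct consequence of the work already performed, and the proof reduces to assembling the three bullet points and remarking that $Y$ is by construction a product of closed bounded intervals.
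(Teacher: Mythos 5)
Your proposal is correct and is essentially the paper's own argument: the paper's proof is the one-line remark that $\beta X$ (equivalently, $h(X)$) already sits inside the product $Y=\prod_{\lambda\in\Lambda} I_\lambda$, with the embedding properties of $h$ having been verified in the bullet points following the definition. You have simply made explicit the citations that the paper leaves implicit.
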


\begin{proof}
$\beta X$ is a subset of such a product!
\end{proof}

\begin{thm}[The Stone-$\check{\text{C}}$ech Property]
Let $X$ be a Tychonoff space.
\\ Say a compactification $(k, \gamma X)$ of $X$ has the \textbf{Stone-$\check{\text{C}}$ech property} if whenever $K$ is a compact $T_2$ space and $f:X \rightarrow K$ is continuous, there exists a continuous map $\gamma f : \gamma X \rightarrow K$ such that $\gamma f \circ k = f$.
\\ ($\gamma f$ will automatically be unique, since it is already determined on the dense set $k(X) \subseteq \gamma X$.)
\\ Then $(h, \beta X)$ has the Stone-$\check{\text{C}}$ech property.
\end{thm}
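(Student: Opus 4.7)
The plan is to reduce to the case of a compactification-valued map that we already know how to handle, namely a product of closed bounded intervals, and then restrict. Given a compact Hausdorff $K$ and continuous $f : X \to K$, I would first invoke Lemma \ref{embed}: since any compact $T_2$ space is Tychonoff, $K$ embeds in some product $Z = \prod_{\mu \in M} I_\mu$ of closed bounded intervals via some embedding $e : K \hookrightarrow Z$. Composing, $e \circ f : X \to Z$ is continuous, so Lemma \ref{sc2} supplies a continuous $g : \beta X \to Z$ with $g \circ h = e \circ f$.

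The next step is to show that $g$ actually takes values in $e(K)$, which would let me define $\gamma f := e^{-1} \circ g$. On the dense subset $h(X) \subseteq \beta X$ we have $g(h(x)) = e(f(x)) \in e(K)$, so $g(h(X)) \subseteq e(K)$. Since $K$ is compact and $Z$ is Hausdorff, $e(K)$ is compact and hence closed in $Z$; by continuity of $g$ we therefore have $g(\beta X) = g(\mathrm{cl}^{\beta X}(h(X))) \subseteq \mathrm{cl}^Z(g(h(X))) \subseteq e(K)$.

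I can then set $\gamma f := e^{-1} \circ g : \beta X \to K$, where $e^{-1}$ denotes the inverse of the corestriction $K \to e(K)$, which is a homeomorphism because $e$ is an embedding; in particular $e^{-1}$ is continuous, hence so is $\gamma f$. Finally, $\gamma f \circ h = e^{-1} \circ g \circ h = e^{-1} \circ e \circ f = f$, as required. Uniqueness of $\gamma f$ is immediate from the remark in the statement that $\gamma f$ is already determined on the dense set $h(X)$, combined with Hausdorffness of $K$.

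The only place where there is anything to worry about is the claim that $g(\beta X) \subseteq e(K)$; everything else is assembly. That step is where the hypothesis that $K$ is compact $T_2$ (rather than just Tychonoff) is doing its work, via the fact that compact subspaces of a Hausdorff space are closed.
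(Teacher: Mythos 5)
Your proof is correct and follows essentially the same route as the paper: embed $K$ into a product of closed bounded intervals via Lemma \ref{embed}, extend using Lemma \ref{sc2}, and use density of $h(X)$ together with closedness of the (image of the) compact set $K$ in the Hausdorff product to see the extension lands in $K$. The only cosmetic difference is that you keep the embedding $e$ explicit and argue via $g(\mathrm{cl}(h(X))) \subseteq \mathrm{cl}(g(h(X)))$, whereas the paper identifies $K$ with its image and argues that the closed set $(\beta f)^{-1}(K)$ contains a dense subset of $\beta X$.
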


\begin{proof}
Since $K$ is compact $T_2$, it is Tychonoff. By Lemma \ref{embed}, without loss of generality there is a product $Z = \prod_{\mu \in M} I_\mu$ of closed bounded intervals such that $K \subseteq Z$.
Viewing $f$ as a continuous function $X \rightarrow Z$, Lemma \ref{sc2} gives us a continuous function $\beta f : \beta X \rightarrow Z$ such that $\beta f \circ h = f$. It only remains to see that the image of $\beta f$ lies in $K$.
\\ K is compact in the Hausdorff space $Z$, hence $K$ is closed in $Z$, so $(\beta f)^{-1} (K)$ is closed in $\beta X$. Also, $f(X) \subseteq K$ implies $h(X) \subseteq (\beta f)^{-1} (K)$. Since $h(X)$ is dense in $\beta X$, we must have $(\beta f)^{-1} (K) = \beta X$.
\end{proof}

\begin{thm} \label{largest}
If $(k, \gamma X)$ is a Hausdorff compactification of $X$ that has the Stone-$\check{\text{C}}$ech property, then $ \langle k', \gamma' X \rangle \leq \langle k, \gamma X \rangle$ for any other compactification $ \langle k', \gamma' X \rangle$.
\end{thm}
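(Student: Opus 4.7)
The plan is extremely short: I just need to produce a continuous map $L: \gamma X \to \gamma' X$ satisfying $L \circ k = k'$, and the Stone-\v{C}ech property of $(k, \gamma X)$ hands this to me almost immediately.

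First I would observe that $\gamma' X$ is a compact Hausdorff space (since $\langle k', \gamma' X \rangle$ is a Hausdorff compactification), and that $k': X \to \gamma' X$ is a continuous map. These are exactly the hypotheses needed to invoke the Stone-\v{C}ech property of $(k, \gamma X)$: taking $K := \gamma' X$ and $f := k'$, the property yields a continuous function $\gamma k': \gamma X \to \gamma' X$ such that $\gamma k' \circ k = k'$.

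Setting $L := \gamma k'$, this $L$ is exactly a witness to $\langle k', \gamma' X \rangle \leq \langle k, \gamma X \rangle$ in the sense defined in the introduction. The main (and only) obstacle is really just recognising that the Stone-\v{C}ech property was stated in precisely the form needed to conclude maximality, so no further work is required. I might add a concluding remark noting that, combined with the earlier observation that mutual $\leq$ gives homeomorphism, this shows that up to equivalence $\beta X$ is the unique largest (greatest) element in the poset of Hausdorff compactifications of $X$.
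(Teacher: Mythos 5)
Your proposal is correct and matches the paper's own proof essentially verbatim: both apply the Stone-$\check{\text{C}}$ech property with $K = \gamma' X$ and $f = k'$ to obtain the witnessing continuous map. No further comment is needed.
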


\begin{proof}
Take $K= \gamma' X$ and $f = k'$ in the definition of $(k, \gamma X)$ having the Stone-$\check{\text{C}}$ech property, to see that there exists a continuous map $\gamma h : \gamma X \rightarrow \gamma' X$ such that $\gamma k' \circ k = k'$. This is precisely the statement that $ \langle k', \gamma' X \rangle \leq \langle k, \gamma X \rangle$.
\end{proof}

\noindent Since the Stone-$\check{\text{C}}$ech compactification has the Stone-$\check{\text{C}}$ech property, we deduce:

\begin{cor}
$\langle h, \beta X \rangle$ is the largest compactification of $X$.
\end{cor}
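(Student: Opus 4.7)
The plan is to chain together the two results immediately preceding this corollary: the theorem that established the Stone-$\check{\text{C}}$ech property holds for $\langle h, \beta X \rangle$, and Theorem \ref{largest}, which says that any compactification enjoying this property dominates every other compactification under $\leq$. There is essentially no new content to produce; the substantive work was done in verifying the Stone-$\check{\text{C}}$ech property, and the rest is bookkeeping.

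Concretely, first I would instantiate Theorem \ref{largest} by taking $(k, \gamma X) = (h, \beta X)$. The hypothesis of that theorem requires $(k, \gamma X)$ to be a Hausdorff compactification with the Stone-$\check{\text{C}}$ech property; both were verified in the constructions above (the itemised checklist after the definition shows $\langle h, \beta X \rangle$ is indeed a $T_2$ compactification, and the preceding theorem gives it the Stone-$\check{\text{C}}$ech property). The conclusion of Theorem \ref{largest} then reads: for any Hausdorff compactification $\langle k', \gamma' X \rangle$ of $X$, we have $\langle k', \gamma' X \rangle \leq \langle h, \beta X \rangle$.

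Second, I would translate this into the language of the poset of equivalence classes of compactifications set up in the introductory section. Since $\langle h, \beta X \rangle$ is an upper bound for every element of that poset, it is a \emph{greatest} element by the definition recalled in the abstract, and this is exactly what is meant by the word ``largest'' in the statement. As noted in the introduction, a greatest element of a poset is unique up to the equivalence relation, so this also implicitly justifies speaking of ``the'' Stone-$\check{\text{C}}$ech compactification.

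The hardest step is not really hard at all: it is simply making sure the quantifiers match, i.e.\ that the arbitrary compactification $\langle k', \gamma' X \rangle$ is allowed to be any Hausdorff compactification, which is indeed the scope assumed throughout this section. I would not expect any obstacle, and the proof should be a one- or two-sentence appeal to the two preceding results.
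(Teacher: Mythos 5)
Your proposal is correct and is exactly the paper's argument: the paper derives this corollary in one line by combining the theorem that $\langle h, \beta X \rangle$ has the Stone-\v{C}ech property with Theorem \ref{largest}, which is precisely the instantiation you describe. No gap here.
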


\noindent On the other hand, we could set $\langle k', \gamma' X \rangle$ in Theorem \ref{largest} to be the Stone-$\check{\text{C}}$ech compactification, to get another corollary:

\begin{cor} \label{sc3}
If $\langle k, \gamma X \rangle$ is a Hausdorff compactification of $X$ that has the Stone-$\check{\text{C}}$ech property, then $ \langle h, \beta X \rangle \leq \langle k, \gamma X \rangle$.
\end{cor}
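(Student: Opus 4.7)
The plan is to observe that this corollary is essentially a direct re-reading of Theorem \ref{largest} with the roles of the two compactifications specialised appropriately. Theorem \ref{largest} asserts that any Hausdorff compactification enjoying the Stone-\v{C}ech property dominates every other Hausdorff compactification of $X$ in the ordering $\leq$. Since $\langle h, \beta X \rangle$ is itself a Hausdorff compactification of $X$ (this was verified in the bulleted checklist following the definition of $\beta X$), we are free to take it as the ``other compactification'' appearing in the conclusion of Theorem \ref{largest}.

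Concretely, I would instantiate Theorem \ref{largest} by letting $\langle k', \gamma' X \rangle := \langle h, \beta X \rangle$ and keeping $\langle k, \gamma X \rangle$ as the given compactification with the Stone-\v{C}ech property. The theorem then yields $\langle h, \beta X \rangle \leq \langle k, \gamma X \rangle$, which is exactly the statement of the corollary. No further construction of a continuous map is required, since Theorem \ref{largest} has already produced one.

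There is essentially no obstacle here: the entire content has been packaged into Theorem \ref{largest}, and the corollary is a single-line instantiation. The only thing one should flag is that $\langle h, \beta X \rangle$ is indeed a legitimate Hausdorff compactification (so that it qualifies as an input to Theorem \ref{largest}), but this was established in the checklist immediately after the definition of the Stone-\v{C}ech compactification, so it may be invoked without further comment.
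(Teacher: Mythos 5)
Your proposal is correct and matches the paper exactly: the text preceding the corollary derives it by setting $\langle k', \gamma' X \rangle$ in Theorem \ref{largest} to be the Stone-\v{C}ech compactification, which is precisely your instantiation. Nothing further is needed.
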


\noindent This says that the Stone-$\check{\text{C}}$ech compactification is the smallest one having the Stone-$\check{\text{C}}$ech extension property. Suppose now that we consider the problem of extending a given family of bounded continuous functions on $X$, rather than all bounded continuous functions.
\\

\noindent For example, suppose we are asked to construct a Hausdorff compactification $\langle k, \gamma \mathbb{R} \rangle $ of $\mathbb{R}$ that has the following property: whenever $f: \mathbb{R} \rightarrow \mathbb{R}$ is of the form $f(x) = \cos(nx)$ for some $n \in \mathbb{Z}$, there exists a continuous function $\gamma f : \gamma  \mathbb{R} \rightarrow  \mathbb{R}$ such that $\gamma f \circ k = f$.
\\

\noindent We give a construction of such a compactification $\langle k, \gamma \mathbb{R} \rangle $, by altering that of $\langle h, \beta X \rangle$. We note beforehand that $\cos(nx)=\cos(-nx)$ for each $n \in \mathbb{Z}$, and the constant function $\cos(0)=1$ extends trivially to any compactification, so we need only consider $n \geq 1$.

\begin{prop}
Consider a set $\{f_n: n \in \mathbb{N} \}$ of functions from $\mathbb{R}$ to $[-1,1]$, where $$f_0 (x)= \tanh(x), \ \ \ \ f_n (x) = \cos(x) \ \forall n \geq 1.$$
\\ Let $Y = \prod_{n \in \mathbb{N}} [-1,1]$.
\\ Define $k: \mathbb{R} \rightarrow Y$ such that for each $n \in \mathbb{N}$, $k(x)(n) = f_n (x)$. Let $\gamma \mathbb{R} = \text{cl}^Y(k(\mathbb{R}))$.
\\ Then $ \langle k, \gamma \mathbb{R} \rangle$ is a compactification of $\mathbb{R}$.
\end{prop}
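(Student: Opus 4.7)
The strategy is to mimic the verification we just did for $\langle h, \beta X \rangle$, with the $\tanh$ coordinate playing the role that was previously played by the entire family of bounded continuous functions. Specifically, I need to check: (i) $\gamma \mathbb{R}$ is compact Hausdorff, (ii) $k$ is continuous, (iii) $k$ is injective, (iv) $k^{-1}$ is continuous on $k(\mathbb{R})$, and (v) $k(\mathbb{R})$ is dense in $\gamma \mathbb{R}$. Item (v) is immediate from $\gamma \mathbb{R} = \mathrm{cl}^Y(k(\mathbb{R}))$, and item (i) follows exactly as in the Stone-\v{C}ech case: $Y$ is a product of copies of $[-1,1]$, hence compact by Tychonoff and Hausdorff, and $\gamma \mathbb{R}$ is a closed subspace of $Y$.

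For continuity of $k$ (item (ii)), I would verify this on subbasic opens. A subbasic open set of $Y$ has the form $U_n \times \prod_{m \neq n} [-1,1]$ with $U_n$ open in $[-1,1]$, and its preimage under $k$ is exactly $f_n^{-1}(U_n)$, which is open since each $f_n$ is continuous on $\mathbb{R}$. So $k$ is continuous.

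The key observation making injectivity (iii) and continuity of $k^{-1}$ (iv) work is the insertion of $f_0 = \tanh$ into the coordinate family. Since $\tanh : \mathbb{R} \to (-1,1)$ is a homeomorphism, it is in particular injective; so if $x \neq y$ then $k(x)(0) = \tanh(x) \neq \tanh(y) = k(y)(0)$, giving $k(x) \neq k(y)$. For the continuous inverse, I would observe that on $k(\mathbb{R})$ the $0$-th coordinate projection $\pi_0$ takes values in $(-1,1)$, and
\[
k^{-1} = \tanh^{-1} \circ \bigl(\pi_0|_{k(\mathbb{R})}\bigr),
\]
which is a composition of continuous maps. Hence $k$ is a homeomorphism onto its image, and $\langle k, \gamma \mathbb{R} \rangle$ is a Hausdorff compactification of $\mathbb{R}$.

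The only subtle step is (iv): without the $\tanh$ coordinate, the cosine coordinates alone could not separate $\mathbb{R}$ from its accumulation behaviour at $\pm \infty$ and would fail to produce an embedding (the preimage of a small neighbourhood of a point in the image would not be bounded). Including $f_0 = \tanh$ is precisely what forces $\pi_0 \circ k$ to be an embedding of $\mathbb{R}$ into $(-1,1) \subsetneq [-1,1]$, and the continuity of $k^{-1}$ then piggybacks on the continuity of $\tanh^{-1}$.
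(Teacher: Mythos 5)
Your proof is correct, and everything except the continuity of $k^{-1}$ matches the paper's verification step for step. Where you diverge is in item (iv): the paper re-runs the Stone--\v{C}ech-style argument, taking $U$ open with $x \in U$, noting that $f_0(x) \notin \text{cl}^{[-1,1]}(f_0(\mathbb{R}\setminus U))$, and pulling back the subbasic open set built from $V_0 = [-1,1]\setminus \text{cl}^{[-1,1]}(f_0(\mathbb{R}\setminus U))$; this is the ``separates points and closed sets'' pattern, and it is the version that generalises to an arbitrary coordinate family in which no single function need be an embedding (cf.\ the Folland reference in the paper). You instead observe that $\tanh$ is already a homeomorphism of $\mathbb{R}$ onto $(-1,1)$ and factor $k^{-1} = \tanh^{-1}\circ(\pi_0|_{k(\mathbb{R})})$ as a composition of continuous maps. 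That is a legitimately shorter route which exploits the special feature of this particular family --- one coordinate alone is an embedding --- at the cost of not illustrating the general mechanism; for this proposition it buys you a cleaner, essentially one-line argument for the only nontrivial item. One small point of care that your phrasing glosses over: you should note explicitly that $\pi_0(k(\mathbb{R})) = \tanh(\mathbb{R}) = (-1,1)$ lies in the domain of $\tanh^{-1}$, which you do, so the composition is well defined and the argument closes.
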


\begin{proof}
We check this is a compactification.
\begin{itemize}
    \item $Y = \prod_{n \in \mathbb{N}} [-1,1]$ is compact (by Tychonoff's Theorem) and $T_2$, since this is true for $[-1,1]$. Therefore, since $\gamma X$ is a subspace of $Y$, it is $T_2$; since it is closed in $Y$, it is compact.
    
    \item $k$ is injective. Suppose we have distinct points $x, y \in \mathbb{R}$. $f_0(x)=\tanh(x)$ is strictly monotone, hence injective. Therefore, $k(x)(n)=f_0 (x) \neq f_0 (y) = k(y)(n) $, so $k(x) \neq k(y)$;
    
    \item $k$ is continuous. A subbasic open set in $Y$ has the form $U_n \times \prod _{m \neq n} [-1,1]$, where $U_n$ is open in $[-1,1]$. Set $U = (U_n \times \prod _{m \neq n} I_m )\cap \gamma \mathbb{R}$, then
$$k ^{-1} (U) = \{x \in \mathbb{R}: k (x) \in U\}= \{x \in \mathbb{R}: k (x)(n) \in U_n \}= \{x \in \mathbb{R}: f_n (x) \in U_n \} =  f_n ^{-1} (U_n),$$
and this is open, since $f_n$ is continuous;

    \item $k^{-1}$ is continuous. It is enough to see that whenever $x \in U$, where $U$ is open in $k(\mathbb{R})$, there is an open $V \ni k(x)$ in $k(\mathbb{R})$ such that $k^{-1} (V) \subseteq U$. Well, since $x \notin \mathbb{R} \backslash U$, which is closed, $f_0(x)=\tanh(x)$ is such that $f_0 (x) \notin \text{cl}^{[-1,1]}(f_0 (\mathbb{R} \backslash U))$. 
    \\ Then $f_0(x) \in V_0 := [-1,1] \backslash \text{cl}^{[-1,1]}(f_0 (\mathbb{R} \backslash U))$, which is open in $[-1,1]$.
    \\ The set $V := (V_0 \times \prod_{n \neq 0} [-1,1]) \cap k(\mathbb{R})$ is open in $k(\mathbb{R})$, and we have
    $$k^{-1} (V) = \{x \in \mathbb{R}: k(x) \in V\} = \{ x \in \mathbb{R}: k(x) (0) \in V_0 \} = \{x \in \mathbb{R}: f_0 (x) \in V_0 )\} = f_0 ^{-1} (V_0),$$
    so $x \in f_0 ^{-1} V_0 = k^{-1}(V)$ shows that $k(x) \in V$. \\ Finally, note that $k^{-1}(V) \subseteq U$, since $f_0 (\mathbb{R} \backslash U) \subseteq [-1,1] \backslash V_0$ implies $f_0^{-1}(V_0)=\mathbb{R} \backslash f_0^{-1}([-1,1] \backslash V_0) \subseteq U$;
    \item $\text{cl}^{\gamma \mathbb{R}} (k(\mathbb{R})) = \gamma \mathbb{R}$ holds.
\end{itemize}

\end{proof}

\noindent Next, let us show that each $f_n$ does extend continuously onto $ \langle k, \gamma \mathbb{R} \rangle$.

\begin{lem} \label{extendo1}
Let $f_n: \mathbb{R} \rightarrow \mathbb{R}, \ x \mapsto \cos(nx)$, where $n \geq 1 $.
Then there exists a continuous function $\gamma f_n : \gamma \mathbb{R} \rightarrow [-1,1]$ such that $\gamma f_n \circ k = f_n$.
\end{lem}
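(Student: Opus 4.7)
The plan is to mimic Lemma \ref{sc1} essentially verbatim: the function $f_n$ we want to extend is, by construction, one of the coordinate functions used to define $k$, so the natural candidate for $\gamma f_n$ is simply the $n$-th coordinate projection of $Y = \prod_{m \in \mathbb{N}} [-1,1]$, restricted to $\gamma \mathbb{R}$.

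Concretely, I would define $\gamma f_n : \gamma \mathbb{R} \to [-1,1]$ by $y \mapsto y(n)$. This is the restriction to $\gamma \mathbb{R} \subseteq Y$ of the coordinate projection $\pi_n : Y \to [-1,1]$, which is continuous by the definition of the Tychonoff product topology; a restriction of a continuous map to a subspace is continuous, so $\gamma f_n$ is continuous.

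Next I would verify the commutativity relation $\gamma f_n \circ k = f_n$. For any $x \in \mathbb{R}$,
\[
(\gamma f_n \circ k)(x) \;=\; \pi_n(k(x)) \;=\; k(x)(n) \;=\; f_n(x),
\]
by the definition of $k$ in the preceding proposition. This finishes the argument, and the codomain $[-1,1]$ is automatic since $\pi_n$ lands in $[-1,1]$ everywhere on $Y$.

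There is essentially no obstacle: the whole point of adjoining the coordinates $f_n(x) = \cos(nx)$ into the defining product was precisely so that each such $f_n$ would become a projection from $\gamma \mathbb{R}$, and therefore extend continuously for free. The only mild subtlety worth flagging is the indexing convention in the proposition (the statement says $f_n(x) = \cos(x)$ for $n \geq 1$, which is presumably a typo for $\cos(nx)$ in light of the lemma statement); once we read $f_n$ consistently as $\cos(nx)$, the argument above is immediate.
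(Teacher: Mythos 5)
Your proof is correct and is essentially identical to the paper's own: both define $\gamma f_n$ as the restriction of the $n$-th coordinate projection to $\gamma\mathbb{R}$ and verify the commutativity relation pointwise. Your observation about the indexing typo in the preceding proposition ($f_n(x)=\cos(x)$ should read $\cos(nx)$ for $n\geq 1$) is also well taken.
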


\begin{proof}
Simply define $\gamma f_n : \gamma \mathbb{R} \rightarrow [-1,1], y \mapsto y(n)$. This is a projection, so it is continuous.
\\ Furthermore, for all $x \in \mathbb{R}$, we have $\gamma f_n \circ k (x) = k(x) (n)= f_n (x)$.
\end{proof}

\noindent This already gives us the result that whenever $f: \mathbb{R} \rightarrow \mathbb{R}$ is of the form $f(x) = \cos(nx)$ for some $n \in \mathbb{Z}$, there exists a continuous function $\gamma f : \gamma  \mathbb{R} \rightarrow  \mathbb{R}$ such that $\gamma f \circ k = f$.

\noindent Next, we show that $ \langle k, \gamma \mathbb{R} \rangle$ is the smallest compactification to which $f_n$ extends continuously for each $n \geq 0$.

\begin{prop}
Suppose $\langle l, \delta \mathbb{R} \rangle $ is a Hausdorff compactification of $\mathbb{R}$ that has the following property: for each $n \geq 0 $, there exists a continuous function $\delta f_n : \delta \mathbb{R} \rightarrow [-1,1]$ such that $\delta f_n \circ l = f_n$.
Then $ \langle k, \gamma \mathbb{R} \rangle \leq \langle l, \delta \mathbb{R} \rangle$.
\end{prop}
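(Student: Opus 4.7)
The plan is to construct the required comparison map $L : \delta \mathbb{R} \to \gamma \mathbb{R}$ explicitly, using the hypothesised extensions $\delta f_n$ as coordinate functions, and then verify (i) continuity, (ii) that the image lands inside $\gamma \mathbb{R}$, and (iii) that $L \circ l = k$.

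First I would define a map $\widetilde L : \delta \mathbb{R} \to Y = \prod_{n \in \mathbb{N}}[-1,1]$ by the rule
\[
\widetilde L(z)(n) = \delta f_n(z) \quad \text{for each } n \in \mathbb{N}.
\]
By the universal property of the Tychonoff product, $\widetilde L$ is continuous iff $\pi_n \circ \widetilde L = \delta f_n$ is continuous for every $n$, which is true by hypothesis. Next I would check the intertwining identity: for any $x \in \mathbb{R}$ and any $n$,
\[
\widetilde L(l(x))(n) = \delta f_n(l(x)) = f_n(x) = k(x)(n),
\]
so $\widetilde L \circ l = k$ as maps into $Y$. In particular $\widetilde L(l(\mathbb{R})) = k(\mathbb{R}) \subseteq \gamma \mathbb{R}$.

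The key step is then to promote $\widetilde L$ to a map with codomain $\gamma \mathbb{R}$. Since $l(\mathbb{R})$ is dense in $\delta \mathbb{R}$ and $\widetilde L$ is continuous,
\[
\widetilde L(\delta \mathbb{R}) = \widetilde L\bigl(\mathrm{cl}^{\delta \mathbb{R}}(l(\mathbb{R}))\bigr) \subseteq \mathrm{cl}^Y\bigl(\widetilde L(l(\mathbb{R}))\bigr) = \mathrm{cl}^Y(k(\mathbb{R})) = \gamma \mathbb{R}.
\]
Hence $\widetilde L$ corestricts to a continuous map $L : \delta \mathbb{R} \to \gamma \mathbb{R}$ satisfying $L \circ l = k$, which is exactly the assertion $\langle k, \gamma \mathbb{R} \rangle \leq \langle l, \delta \mathbb{R} \rangle$.

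The main subtlety worth being careful about is the second step: one must use continuity of $\widetilde L$ together with the density of $l(\mathbb{R})$ in $\delta \mathbb{R}$ to push closures through, and observe that $\gamma \mathbb{R}$, being closed in $Y$, already absorbs the closure of $k(\mathbb{R})$ taken in the ambient product. Everything else is essentially bookkeeping with the definition of $k$ and the product topology; the argument is a close mirror of the proof of the Stone--\v{C}ech extension property, now restricted to the countable family $\{f_n : n \in \mathbb{N}\}$ that generated $\gamma \mathbb{R}$ in the first place.
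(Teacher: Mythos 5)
Your proof is correct and follows essentially the same route as the paper: define the map coordinatewise by $z \mapsto (\delta f_n(z))_{n}$, verify continuity via the product topology, and check the intertwining identity $L \circ l = k$. The only difference is that you explicitly justify that the image lands in $\gamma\mathbb{R}$ via density of $l(\mathbb{R})$ and closedness of $\gamma\mathbb{R}$ in the product --- a point the paper's proof passes over silently by simply declaring the codomain to be $\gamma\mathbb{R}$ --- so your version is, if anything, slightly more careful.
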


\begin{proof}
Define $F : \delta \mathbb{R} \rightarrow \gamma \mathbb{R} $ as follows: for each $y \in \delta \mathbb{R}$ and $n \in \mathbb{N}$, let $F(y)(n)= \delta f_n (y)$. Clearly, $F \circ l = k$, since for all $x \in \mathbb{R}$,
$$F \circ l (x) (n) = F (l (x)) (n) = \delta f_n (l(x)) = f_n(x)=  k (x) (n).$$

\noindent It remains to see that $F$ is continuous.
\\ Recall that $\gamma \mathbb{R} \subseteq \prod_{n \in \mathbb{N}} [-1,1]$, and a subbasic open set in $\prod_{n \in \mathbb{N}}[-1,1]$ has the form $U_n \times \prod _{m \neq n} I_m$, where $U_n$ is open in $[-1,1]$. Let $U = (U_n \times \prod _{m \neq n} I_m) \cap \gamma \mathbb{R} $, then
$$F ^{-1} (U) = \{y \in \delta \mathbb{R}: F (y) \in U\}= \{y \in \delta \mathbb{R}: F (y) (n) \in U_n\} = \{y \in \delta \mathbb{R}: \delta f_n (y) \in U_n \} =(\delta f_n)^{-1}(U_n).$$
This is open, since $\delta f_n$ is continuous.
\end{proof}

\noindent Note that this proposition does not quite tell us that $ \langle k, \gamma \mathbb{R} \rangle$ is the smallest compactification to which $f(x)=\cos(nx)$ extends continuously for each $n \in \mathbb{Z}$, because among the $f_n$ is the function $f_0(x)=\tanh(x)$, which we added to the family in order to construct $\gamma \mathbb{R}$. We did this so that the family would \textit{separate points and closed sets}; for a more general construction see Folland (1999).
\\

\noindent Nevertheless, we shall show in the next section that this compactification is genuinely different from the ones we have seen before.

\section{A genuinely new compactification}

The compactification we have just constructed is genuinely different from any of the one-point, two-point, or Stone-$\check{\text{C}}$ech compactification of $\mathbb{R}$. It cannot be the one-point or two-point compactification, because the function $\mathbb{R} \rightarrow \mathbb{R}, x \mapsto \text{cos}(x)$ does not extend continuously to either of these:

\begin{prop}
Let $f: \mathbb{R} \rightarrow \mathbb{R}, x \mapsto \text{cos}(x)$. There is no continuous function extending $f$ to either the Alexandroff one-point compactification or the two point compactification of $\mathbb{R}$.
\end{prop}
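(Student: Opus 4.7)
The plan is to argue by contradiction, exploiting the oscillation of $\cos$. In both compactifications the added point(s) are limits (in the compactified sense) of every sequence $(x_n)$ in $\mathbb{R}$ with $|x_n| \to \infty$, while $\cos$ fails to converge along such sequences. So the obvious candidate for an extended value at $\infty$ (or at $\pm \infty$) cannot be chosen consistently.

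More concretely, I would suppose $\tilde f$ is a continuous extension of $f$ to the relevant compactification, and pick two sequences $x_n = 2\pi n$ and $y_n = (2n+1)\pi$, which satisfy $\cos(x_n) = 1$ for all $n$ and $\cos(y_n) = -1$ for all $n$. For the two-point compactification $[-\infty, \infty]$, both $x_n \to +\infty$ and $y_n \to +\infty$ in that space (since every neighbourhood of $+\infty$ has the form $(M, \infty]$). Continuity of $\tilde f$ at $+\infty$ forces $\tilde f(x_n) \to \tilde f(+\infty)$ and $\tilde f(y_n) \to \tilde f(+\infty)$, so $\tilde f(+\infty) = 1$ and $\tilde f(+\infty) = -1$ simultaneously, a contradiction.

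For the Alexandroff one-point compactification $\mathbb{R} \cup \{\infty\}$, the neighbourhoods of $\infty$ are complements of compact subsets of $\mathbb{R}$, i.e.\ they contain sets of the form $\{\infty\} \cup (\mathbb{R} \setminus [-M, M])$. Thus $x_n \to \infty$ and $y_n \to \infty$ in this topology too. The same argument applied to $\tilde f(\infty)$ then yields $1 = \tilde f(\infty) = -1$, a contradiction.

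No part of this is difficult; the only point requiring any care is verifying that the two sequences really do converge to the added point(s) in each compactification, which amounts to writing down the local bases at $\infty$ (respectively at $+\infty$) and noting that $|x_n|, |y_n| \to \infty$ in the usual sense. I would present the proof as a single short argument covering both cases, highlighting the two sequences $2\pi n$ and $(2n+1)\pi$ and then reading off the contradiction from continuity.
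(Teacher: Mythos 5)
Your proof is correct and follows the same basic strategy as the paper's: assume a continuous extension exists, take sequences tending to the point at infinity, and derive a contradiction from the behaviour of $\cos$ along them. The one genuine difference is your choice of sequences. The paper uses the single sequence $x_n = n$ and concludes that $\tilde f(\infty) = \lim_{n\to\infty}\cos(n)$, ``but this does not exist in $\mathbb{R}$'' --- a true statement, but one that itself requires an argument (e.g.\ via the density of $\{n \bmod 2\pi\}$ or a subsequence extraction), which the paper omits. Your two sequences $2\pi n$ and $(2n+1)\pi$, along which $\cos$ is identically $1$ and identically $-1$ respectively, make the contradiction immediate: the extended value at the added point would have to equal both $1$ and $-1$. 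This is a cleaner and more self-contained version of the same idea, and it costs nothing. One small caution: your opening remark that the added points are limits of \emph{every} sequence with $|x_n|\to\infty$ is not literally true for the two-point compactification (a sequence oscillating between $+\infty$ and $-\infty$ converges to neither endpoint); but your actual argument only uses sequences tending to $+\infty$, where the claim is correct, so nothing breaks.
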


\begin{proof}
Suppose for a contradiction we did have such an extension $\tilde{f}$.
\\ \noindent One way to write the one-point compactification is as $\langle i_1, S^1 \rangle$ where $i_1: \mathbb{R} \rightarrow S^1, x \mapsto (\frac{2x}{1+x^2},\frac{x^2-1}{1+x^2})$. Note that $\text{lim}_{n \rightarrow \infty} i_1(n)=(0,1)$. Hence, by the continuity of $\tilde {f}$ we would have

$$\tilde {f} (0,1) = \tilde {f} (\text{lim}_{n \rightarrow \infty} i_1(n))=  \text{lim}_{n \rightarrow \infty} (\tilde {f} \circ i_1)(n) =\text{lim}_{n \rightarrow \infty} \text{cos}(n), $$

\noindent but this does not exist in $\mathbb{R}$.
\\

\noindent Similarly, we may write the two-point compactification as $\langle i_2, [-1,1] \rangle$ where $i_2: \mathbb{R} \rightarrow [-1,1], x \mapsto \text{tanh}(n)$. Now $\text{lim}_{n \rightarrow \infty} i_2(n)=1$, so

$$\tilde {f} (1) = \tilde {f} (\text{lim}_{n \rightarrow \infty} i_2(n))=  \text{lim}_{n \rightarrow \infty} (\tilde {f} \circ i_2)(n) =\text{lim}_{n \rightarrow \infty} \text{cos}(n) $$

\noindent again contradicts that this limit does not exist.

\end{proof}

\noindent To show that $\gamma \mathbb{R}$ is not homeomorphic to $\beta \mathbb{R}$, we will show that the former is metrisable while the latter is not.
\\

\noindent The following is a standard result.

\begin{lem}
A countable product of metric spaces is metrisable.
\end{lem}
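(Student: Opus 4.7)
The plan is to construct an explicit metric on the product $\prod_{n \in \mathbb{N}} X_n$, where each $(X_n, d_n)$ is a metric space, and then verify that the induced topology coincides with the product topology.

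First I would replace each $d_n$ by an equivalent bounded metric, for instance $\tilde{d}_n(x,y) = \min(d_n(x,y), 1)$; a routine check (which I would not belabor) shows $\tilde{d}_n$ is a metric generating the same topology on $X_n$ as $d_n$. Boundedness is what lets us sum the coordinate contributions. I would then define
$$d(x,y) = \sum_{n=0}^{\infty} \frac{\tilde{d}_n(x_n, y_n)}{2^n},$$
which converges and is dominated by $2$. Verifying that $d$ is a metric is straightforward: non-negativity, symmetry, and $d(x,y)=0 \iff x=y$ all follow termwise from the corresponding properties of $\tilde{d}_n$, while the triangle inequality for $d$ follows from the triangle inequality for each $\tilde{d}_n$ and linearity of the sum.

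The substantive step is showing the topology $\tau_d$ induced by $d$ coincides with the product topology $\tau_{\text{prod}}$. For $\tau_d \subseteq \tau_{\text{prod}}$, given a metric ball $B_d(x, \varepsilon)$ and $y \in B_d(x,\varepsilon)$, I would choose $N$ large enough that $\sum_{n > N} 2^{-n} < (\varepsilon - d(x,y))/2$, and then take the product-open neighbourhood of $y$ consisting of points whose first $N+1$ coordinates lie within $\tilde{d}_n$-distance $\delta$ of $y_n$, for $\delta$ small enough that the partial sum contribution stays within the remaining budget. Conversely, for $\tau_{\text{prod}} \subseteq \tau_d$, I would take a subbasic product-open set $\pi_n^{-1}(U_n)$ and a point $x$ therein; choose $r > 0$ with $B_{\tilde{d}_n}(x_n, r) \subseteq U_n$, and observe that $B_d(x, r/2^n) \subseteq \pi_n^{-1}(U_n)$, since $d(x,y) < r/2^n$ forces $\tilde{d}_n(x_n,y_n) < r$.

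The main obstacle, such as it is, is organising the $\varepsilon$--$N$--$\delta$ bookkeeping for the first inclusion cleanly; the geometric weights $2^{-n}$ make this go through because the tail of the series can be made arbitrarily small independently of the coordinate values (which are bounded by $1$), leaving only finitely many coordinates to control, and those are exactly what the product topology controls. Countability is essential here: without it the series defining $d$ would not make sense, and indeed an uncountable product of non-trivial metric spaces generally fails to be first-countable, hence fails to be metrisable.
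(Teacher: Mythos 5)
Your proposal is correct and follows essentially the same route as the paper's proof: truncate each metric at $1$, form the weighted sum $\sum_n \tilde{d}_n(x_n,y_n)/2^n$, and verify the two inclusions of topologies by splitting off a small tail and controlling finitely many coordinates. The only cosmetic difference is that the paper deduces $\tau_{\text{prod}} \subseteq \tau_d$ from continuity of the projections via the universal property of the product, whereas you exhibit a metric ball inside each subbasic set directly; both are fine.
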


\begin{proof}
The result is easy for finite products. (Alternatively, if you like, it is deducible from the case of countably infinite products, by setting all-but-finitely-many of the factors to be singletons.)
\\

\noindent Let $\{(X_n, d_n): n \in \mathbb{N}\}$ be a countably infinite family of metric spaces.

\begin{itemize}
    \item[] \textbf{Claim}: We may assume each $d_n$ is bounded above by $1$.
    \item[] \textbf{Proof}: To prove the claim, it is enough to see that any metric $d$ on any space $X$ has an equivalent metric $d'$ defined by $d'(x,y)=\text{min}\{1,d(x,y)\}$. This is a metric on $X$:
    \begin{itemize}
        \item it is non-negative, and zero if and only if $x=y$;
        \item it is symmetric in its variables;
        \item $\text{min}\{1,d(x,z)\} \leq \text{min}\{1,d(x,y)\} +\text{min}\{1,d(y,z)\}$.
        \\ If $d(x,y), d(y,z) \leq 1$, then $$\text{min}\{1,d(x,z)\} \leq d(x,z) \leq d(x,y) + d(y,z)=\text{min}\{1,d(x,y)\} +\text{min}\{1,d(y,z)\}.$$
        Otherwise, without loss $d(x,y)> 1$, then $$\text{min}\{1,d(x,z)\} \leq 1 = \text{min}\{1,d(x,y)\} \leq \text{min}\{1,d(x,y)\} +\text{min}\{1,d(y,z)\}.$$
    \end{itemize}
    $d'$ induces the same topology as $d$ does on $X$.
    Indeed, wite $B^d_r(x)$ and $B^{d'}_r(x)$ respectively for the open balls of radius $r$ centered at $x$, with respect to $d$ and $d'$ respectively. Since $d' \leq d$, we certainly have $B^d_r(x) \subseteq B^{d'}_r(x)$ for all $r >0, x\in X$, so the topology induced by $d$ is finer than that induced by $d'$. On the other hand, for all $r >0, x\in X$, we have $B^{d'}_{r'}(x) \subseteq B^{d}_r(x)$ where $r'=\text{min}\{1,r\}$. Indeed, suppose $y \in B^{d'}_{r'}(x)$. Then $\text{min}\{1,d(x,y)\} < \text{min}\{1,r\}$, so $d(x,y) < r$.
\end{itemize}

\noindent By the claim, we may assume each $d_n$ is bounded above by $1$, so it makes sense to define, for $x,y \in \prod_{n \in \mathbb{N}}X_n$, $$d(x,y) = \Sigma_{n \in \mathbb{N}} \frac{d_n(x(n),y(n))}{2^n},$$
since this series converges to a value no greater than the convergent series $\Sigma_{n \in \mathbb{N}} \frac{1}{2^n}=2$.

\begin{itemize}
    \item[] \textbf{Claim}: $d$ defined above is a metric on the product space $\prod_{n \in \mathbb{N}}X_n$.
    \item[] \textbf{Proof}:
    \begin{itemize}
        \item it is non-negative, and zero if and only if every term in the series is zero, if and only if $x$ and $y$ agree on every component, if and only if $x=y$;
        \item it is symmetric in its variables;
        \item $\Sigma_{n \in \mathbb{N}} \frac{d_n(x(n),z(n))}{2^n} \leq \Sigma_{n \in \mathbb{N}} \frac{d_n(x(n),y(n))}{2^n} +\Sigma_{n \in \mathbb{N}} \frac{d_n(y(n),z(n))}{2^n}$ follows immediately from the triangle inequalities for the individual $d_n$.
    \end{itemize}
\end{itemize}

\noindent It remains to check that this metric induces the usual product topology on $\prod_{n \in \mathbb{N}}X_n$.
\\

\noindent Given $r>0, N \in \mathbb{N},$ and $x,y \in \prod_{n \in \mathbb{N}}X_n$, we certainly have $d_N(x(N),y(N)) < r$ whenever $d(x,y) < \frac{r}{2^N}$. Therefore, the projections $\pi_N: (\prod_{n \in \mathbb{N}}X_n,d) \rightarrow (X_N,d_N)$ are continuous with respect to these metrics. Therefore the topology $\tau_d$ induced by $d$ on the product space is finer than the Tychonoff topology $\tau$. One way to see this is via the universal property of the product: the projection maps $\pi_N: (\prod_{n \in \mathbb{N}}X_n,d) \rightarrow (X_N,d_N)$ give rise to a unique continuous map $i: (\prod_{n \in \mathbb{N}}X_n,\tau_d) \rightarrow (\prod_{n \in \mathbb{N}}X_n,\tau)$ such that $i \circ \pi_N = \pi_N$ for each $N$. Of course, setting $i$ to be the identity map satisfies this equation, and therefore we must have that the identity is continuous as a map $(\prod_{n \in \mathbb{N}}X_n,\tau_d) \rightarrow (\prod_{n \in \mathbb{N}}X_n,\tau)$. In particular, taking the preimage of each open set under the identity map, we see that $\tau \subseteq \tau_d$.
\\

\noindent On the other hand, we show that any open set $U$ in $(\prod_{n \in \mathbb{N}}X_n,\tau_d)$ is also open in the Tychonoff topology. Let $x \in U$. There is some $r >0$ with $B^d_r(x) \subseteq U$. Choose some $k$ large enough so that $\Sigma_{n=k+1}^\infty \frac{1}{2^n}=\frac{1}{2^k}<\frac{r}{2}$.
\\ For each $n \in \{0, \ldots, k\}$, define $U_n = B^{d_n}_{r / 4}(x(n))$.
Then,
$$x \in \bigcap_{n=0}^k\pi_n^{-1}(U_n) \subseteq B^d_r(x) \subseteq U.$$
Indeed, whenever $y \in \bigcap_{n=0}^k\pi_n^{-1}(U_n)$, we have $d_n(x(n),y(n)) < r / 4$ for each $n \in \{0, \ldots, k\}$, so
$$d(x,y) = \Sigma_{n=0}^k \frac{d_n(x(n),y(n))}{2^n} + \Sigma_{n=k+1}^\infty \frac{d_n(x(n),y(n))}{2^n} < \frac{r}{2} + \frac{r}{2} = r.$$

\noindent Since $\bigcap_{n=0}^k\pi_n^{-1}(U_n) \in \tau$, we have shown that $U$ is open in the Tychonoff topology, as required!
\end{proof}

\begin{cor}
$\gamma \mathbb{R}$ is metrisable.
\end{cor}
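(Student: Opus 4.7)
The plan is to exploit the construction of $\gamma \mathbb{R}$ as a subspace of the countable product $Y = \prod_{n \in \mathbb{N}}[-1,1]$. Each factor $[-1,1]$ is a metric space (indeed, equipped with the standard Euclidean metric inherited from $\mathbb{R}$), and the indexing set $\mathbb{N}$ is countable, so the previous lemma applies directly to $Y$.

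First I would invoke the previous lemma to conclude that $Y = \prod_{n \in \mathbb{N}}[-1,1]$ is metrisable, say by the explicit metric
$$d(x,y) = \sum_{n \in \mathbb{N}} \frac{|x(n)-y(n)|}{2^n},$$
which is well-defined because each factor metric is already bounded above by $2$ (so a harmless rescaling puts us in the situation of the lemma). Next, I would recall the elementary fact that metrisability is inherited by subspaces: if $(Y,\tau)$ is homeomorphic to a metric space via a metric $d$, then any subspace $A \subseteq Y$ with the subspace topology is metrisable via the restriction $d|_{A \times A}$. Since $\gamma \mathbb{R} = \mathrm{cl}^Y(k(\mathbb{R}))$ is by definition a subspace of $Y$, this immediately yields metrisability of $\gamma \mathbb{R}$.

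There is essentially no obstacle here; the corollary is a one-line application of the lemma, once one observes that the construction of $\gamma\mathbb{R}$ was deliberately arranged so that the ambient product had only countably many factors (namely, one factor for $\tanh$ and one for each $\cos(nx)$ with $n \geq 1$). This countability is precisely what makes the contrast with $\beta \mathbb{R}$ work, since the ambient product defining $\beta \mathbb{R}$ is indexed by the uncountable set of all bounded continuous functions $\mathbb{R} \to \mathbb{R}$, and metrisability will fail there for reasons to be addressed separately.
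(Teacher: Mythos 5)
Your proposal is correct and follows exactly the paper's own route: apply the countable-product metrisability lemma to $Y = \prod_{n \in \mathbb{N}}[-1,1]$ and then use the fact that metrisability passes to subspaces, noting that $\gamma\mathbb{R}$ sits inside $Y$ by construction. The extra remarks about the explicit metric and the contrast with $\beta\mathbb{R}$ are accurate but not needed for the corollary itself.
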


\begin{proof}
$\gamma \mathbb{R}$ can be embedded into the product $\prod_{n \in \mathbb{N}} [-1,1]$, which by the lemma above can be given a metric space structure. Identifying $\gamma \mathbb{R}$ with its image in the product space, it will inherit the subspace metric induced by the metric on the product space.
\end{proof}

\begin{lem} \label{nomax}
A non-compact Tychonoff space has no maximal metrisable $T_2$ compactifications.
\end{lem}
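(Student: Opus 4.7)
The plan is to take any metrisable $T_2$ compactification $\langle k, \gamma X \rangle$ of the non-compact Tychonoff space $X$ and build a strictly larger metrisable $T_2$ compactification by adjoining one extra continuous function engineered to distinguish two sequences that $\gamma X$ cannot separate. Identifying $X$ with $k(X) \subseteq \gamma X$, non-compactness forces $\gamma X \setminus X \neq \emptyset$, and since $\gamma X$ is metrisable with $X$ dense, I can fix $p \in \gamma X \setminus X$ and extract a sequence $(x_n)$ of pairwise distinct points of $X$ with $x_n \to p$.

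First I would split this sequence into $A = \{x_{2n}\}$ and $B = \{x_{2n+1}\}$. Both subsequences still converge to $p$, so their closures in $\gamma X$ are $A \cup \{p\}$ and $B \cup \{p\}$ respectively; because $p \notin X$, this shows $A$ and $B$ are disjoint subsets of $X$ that are each closed in $X$. The subspace $X \subseteq \gamma X$ inherits metrisability and is therefore normal, so Urysohn's Lemma furnishes a continuous $f: X \to [0,1]$ with $f|_A \equiv 0$ and $f|_B \equiv 1$.

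Next I would define the diagonal map $k': X \to \gamma X \times [0,1]$ by $k'(x) = (k(x), f(x))$ and set $\gamma' X := \text{cl}^{\gamma X \times [0,1]}(k'(X))$. The ambient product is compact $T_2$ and metrisable (using the preceding lemma on countable products of metric spaces, and noting that $[0,1]$ is metric), so $\gamma' X$ inherits all three properties. A routine check — injectivity of $k'$ from that of $k$, continuity from continuity of $k$ and $f$, and continuity of the inverse on $k'(X)$ via $k^{-1} \circ \pi_1$ — confirms that $k'$ is an embedding, so $\langle k', \gamma' X \rangle$ is a metrisable $T_2$ compactification. The restriction of the projection $\pi_1$ is a continuous map $\gamma' X \to \gamma X$ satisfying $\pi_1 \circ k' = k$, which immediately gives $\langle k, \gamma X \rangle \leq \langle k', \gamma' X \rangle$.

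The one piece of real content is showing this inequality is strict. Suppose for contradiction there were a continuous $M: \gamma X \to \gamma' X$ with $M \circ k = k'$. Since $x_{2n} \to p$ and $x_{2n+1} \to p$ in $\gamma X$, continuity of $M$ would simultaneously force $M(p) = \lim k'(x_{2n}) = (p, 0)$ and $M(p) = \lim k'(x_{2n+1}) = (p, 1)$, a contradiction. I expect the only mildly delicate step in writing this up is the verification that $A$ and $B$ are genuinely closed in $X$ (which is where $p \notin X$ is used) and that $k'$ really is an embedding; everything afterwards is standard compactness and limit-of-sequence bookkeeping of the kind already used in the report.
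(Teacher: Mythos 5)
Your proposal is correct and follows essentially the same route as the paper: pick a point $p$ of $\gamma X \setminus X$, extract a sequence of distinct points of $X$ converging to it, separate the even and odd subsequences by a Urysohn function $f$, and adjoin $f$ as a second coordinate in $\gamma X \times [0,1]$ to obtain a strictly larger metrisable compactification. The only cosmetic difference is how strictness is certified: the paper observes that $f$ extends continuously to the new compactification but not to the old one, whereas you directly rule out a comparison map $M$ with $M \circ k = k'$ by evaluating it along the two subsequences --- the same contradiction ($0=1$ at the limit point) either way.
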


\begin{proof}
Suppose $X$ is a non-compact metric space, and $\langle m , \eta X \rangle$ is a metrisable $T_2$ compactification. We construct another compactification that is strictly larger. $m(X)$ is homeomorphic to $X$, hence non-compact, hence $m(X) \neq \eta X$. Pick any $x \in \eta X \backslash m(X)$. Since $\text{cl}^{\eta X}(m(X))=\eta X$, there is a sequence $(x_n)$ of distinct points in $m(X)$ converging to $x$ in the metric $d$ on $\eta X$. (The open ball $B_1(x)$ must meet $m(X)$ at some point $x_1$; the open ball $B_{\text{min}\{2^{-i},d(x,x_i)\}}(x)$ must meet $m(X)$ at some point $x_{i+1}$ for each $i \geq 1$. In this way we construct an infinite sequence of distinct points of $m(X)$ whose distance to $x$ tends to $0$.)
\\

\noindent Consider the disjoint subsets $S_0= \{x_i: i\text{ is even}\}$ and $S_1= \{x_i: i\text{ is odd}\}$ of $m(X)$. Each $S_i$ is closed in $m(X)$, since no any sequence in $S_i$ has a limit in $m(X)$. (If the limit of such a sequence existed, it would have to be $x$, but this is in $\eta X \backslash m(X)$.) Since $m(X)$ is a subset of the metric space $\eta X$, it is metrisable and hence normal, so by Urysohn's Lemma there exists a continuous function $F: m(X) \rightarrow [0,1]$ such that $F(S_0)=\{0\}, F(S_1)=\{1\}$. This function does not extend continuously to $\eta X$. For if it did, then we would have $$F(x)=F(\text{lim}_{n \rightarrow \infty}x_{2n})=\text{lim}_{n \rightarrow \infty}F(x_{2n})=0,$$ and similarly $$F(x)=F(\text{lim}_{n \rightarrow \infty}x_{2n+1})=\text{lim}_{n \rightarrow \infty}F(x_{2n+1})=1,$$ which taken together produce an obvious contradiction.
\\

\noindent Consider the function $m': X \rightarrow \eta X \times [0,1], s \mapsto (m(s),F(s))$. This is continuous since both of its components are continuous. Then $\langle m', \tilde{X} \rangle $ where $\tilde{X} = \text{cl}^{\eta X \times [0,1]} (m'(X)) $ is a $T_2$ compactification of $X$:
\begin{itemize}
    \item $\tilde{X}$ is compact and $T_2$, since it is a closed subspace of the compact $T_2$ space $\eta X \times [0,1]$;
    \item $m'$ is injective since its first component is injective;
    \item $m'$ is continuous;
    \item $m'^{-1}$ is continuous as the composition of the first projection $\pi_1: m'(X) \rightarrow \pi_1(m'(X))$ and the map $m^{-1}: m(X) \rightarrow X$;
    \item $\text{cl}^{\tilde{X}} (m'(X)) = \tilde{X}$ holds.
\end{itemize}

\noindent This compactification is larger than $\langle m , \eta X \rangle$, because there exists a continuous function $\pi_1: \tilde{X} \rightarrow \eta X$ such that $\pi_1 \circ m' = m$; this is simply the first projection $\pi_1: (z,t) \mapsto z$.
\\

\noindent On the other hand, $F$ extends continuously to $\tilde{X}$; consider $\tilde{F}: \tilde{X} \rightarrow [0,1], \ (z,t) \mapsto t$. This is just the second projection, so it is continuous, and we have $\tilde{F} \circ m'= F$. Since $F$ did not extend continuously to $\eta X$, we conclude that there is no homeomorphism from $\eta X$ to $\tilde{X}$ (or else we could compose $\tilde{F}$ with such a homeomorphism to get an extension of $F$ to $\eta X$). In particular, $\tilde{X}$ is a strictly larger compactification of $X$.

\end{proof}

\begin{cor}
For any non-compact Tychonoff space $X$, the Stone-$\check{\text{C}}$ech compactification $\langle h,\beta X \rangle$ is not metrisable.
\end{cor}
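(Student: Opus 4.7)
My plan is to argue by contradiction, leveraging the two facts already in hand: first, that $\langle h, \beta X\rangle$ is the greatest Hausdorff compactification of $X$ (the corollary following Theorem \ref{largest}), and second, Lemma \ref{nomax}, which asserts that a non-compact Tychonoff space admits no maximal metrisable $T_2$ compactification. The suspicion is that if $\beta X$ were metrisable, these two facts would clash: greatest-ness among all compactifications would in particular force maximality among the metrisable ones, which is forbidden.

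Concretely, I would assume for contradiction that $\beta X$ is metrisable. Then $\langle h, \beta X\rangle$ itself qualifies as a metrisable $T_2$ compactification of $X$. Since $X$ is non-compact and Tychonoff, Lemma \ref{nomax} produces a metrisable $T_2$ compactification $\langle m', \tilde X\rangle$ with $\langle h,\beta X\rangle < \langle m', \tilde X\rangle$. In particular $\langle m',\tilde X\rangle$ is a compactification of $X$ that is strictly larger than $\langle h,\beta X\rangle$ in the partial order on equivalence classes.

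On the other hand, because $\langle h,\beta X\rangle$ is the greatest compactification, we automatically have $\langle m',\tilde X\rangle \leq \langle h,\beta X\rangle$. Combined with $\langle h,\beta X\rangle \leq \langle m',\tilde X\rangle$, this would make the two equivalent, contradicting the strict inequality produced by Lemma \ref{nomax}. Hence $\beta X$ cannot be metrisable.

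There is essentially no obstacle beyond verifying that the hypotheses of Lemma \ref{nomax} are met, which is immediate: $X$ is assumed non-compact and Tychonoff, which is exactly what the lemma requires. The real content of the argument was packed into Lemma \ref{nomax}; the present corollary is a clean extraction, since $\beta X$'s universal property prevents it from coexisting with any strictly larger compactification, metrisable or otherwise.
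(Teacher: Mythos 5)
Your proposal is correct and is essentially the paper's own argument: the paper likewise observes that $\beta X$, being the greatest compactification, would be maximal among the metrisable ones if it were metrisable, contradicting Lemma \ref{nomax}. You merely unfold the definition of maximality into an explicit strictly-larger compactification and derive the contradiction by hand, which is the same route in slightly more detail.
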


\begin{proof}
$\beta X$ is maximal among \textit{all} compactifications, hence if it were metrisable it would be maximal among all metrisable compactifications.
\end{proof}

\begin{cor}
$\beta \mathbb{R}$ is not metrisable.
\end{cor}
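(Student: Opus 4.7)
The plan is to simply invoke the preceding corollary. That corollary asserts that for any non-compact Tychonoff space $X$, the Stone-$\check{\text{C}}$ech compactification $\beta X$ fails to be metrisable, so the entire task reduces to verifying that $\mathbb{R}$ meets both hypotheses.

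First I would note that $\mathbb{R}$ with its standard topology is non-compact: this is immediate from the Heine--Borel theorem, or more elementarily from the fact that the open cover $\{(-n,n): n \in \mathbb{N}\}$ admits no finite subcover. Second, I would record that $\mathbb{R}$ is Tychonoff. One clean way is to observe that $\mathbb{R}$ is metrisable (via the Euclidean metric), and every metric space is normal and $T_1$, hence in particular completely regular Hausdorff. Thus $\mathbb{R}$ is a non-compact Tychonoff space.

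With both hypotheses verified, the previous corollary applies directly to give that $\beta \mathbb{R}$ is not metrisable. There is no real obstacle here; the conceptual work was done in Lemma \ref{nomax} (no maximal metrisable $T_2$ compactifications for non-compact Tychonoff spaces), which forces any metrisable compactification to be strictly dominated by a larger one, whereas $\beta \mathbb{R}$ is by construction the greatest compactification of $\mathbb{R}$ and so cannot be strictly dominated by anything. The proof is essentially a one-liner citing the preceding corollary.
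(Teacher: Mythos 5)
Your proposal is correct and matches the paper's argument exactly: the paper's proof is the one-liner ``$\mathbb{R}$ is a non-compact metric space!'', i.e.\ it too just verifies the hypotheses of the preceding corollary. Your additional detail (non-compactness via Heine--Borel, Tychonoff via metrisability) is a harmless elaboration of the same step.
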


\begin{proof}
$\mathbb{R}$ is a non-compact metric space!
\end{proof}

\noindent Now, clearly $\beta \mathbb{R}$ was homeomorphic to $\gamma \mathbb{R}$, since one is metrisable and the other is not. We therefore obtain our desired result:
\begin{cor}
$\gamma \mathbb{R}$ is not homeomorphic to $\beta \mathbb{R}$.
\end{cor}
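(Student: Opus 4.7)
The plan is to exploit the fact that metrisability is a topological invariant, combined with the two corollaries immediately above. Specifically, the previous results establish that $\gamma \mathbb{R}$ is metrisable (as a subspace of the countable product $\prod_{n \in \mathbb{N}} [-1,1]$, which is metrisable by the standard lemma on countable products of metric spaces), while $\beta \mathbb{R}$ is not metrisable (since $\mathbb{R}$ is a non-compact metric space, hence a non-compact Tychonoff space, and $\beta \mathbb{R}$ is maximal among all its compactifications, whereas by Lemma \ref{nomax} no metrisable $T_2$ compactification of a non-compact Tychonoff space can be maximal).

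Therefore I would proceed by contradiction: suppose there were a homeomorphism $\varphi : \gamma \mathbb{R} \to \beta \mathbb{R}$. Then, pulling back a metric $d$ on $\gamma \mathbb{R}$ via $\varphi^{-1}$ would yield a metric $d'(x,y) := d(\varphi^{-1}(x), \varphi^{-1}(y))$ on $\beta \mathbb{R}$ inducing the same topology, contradicting the fact that $\beta \mathbb{R}$ is not metrisable. There is no real obstacle here — the entire content was shouldered by the two preceding corollaries; this final corollary is essentially a one-line invocation of the topological invariance of metrisability.
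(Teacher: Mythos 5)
Your proof is correct and follows exactly the paper's route: metrisability of $\gamma\mathbb{R}$, non-metrisability of $\beta\mathbb{R}$, and the topological invariance of metrisability (which you usefully spell out by transporting the metric along the putative homeomorphism, a step the paper leaves implicit). Nothing is missing.
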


\section{Uncountably many compactifications of $\mathbb{R}$}

\noindent Our final task is to show that there are uncountably many different $T_2$ compactifications of $\mathbb{R}$.
\\ For this, we introduce the concept of the \textit{inverse limit} (which really is a limit, in the categorical sense) of a sequence of spaces with maps between them.

\begin{defn}
Suppose that $\langle X_n, d_n \rangle$, for $n \in \mathbb{N}$, is a pair such that $X_n$ is a topological space, and $d_n: X_{n+1} \rightarrow X_n$ is continuous.
\\ The \textbf{inverse limit} $\langle X_\omega, d_{\omega,n} \rangle$ of the sequence $\langle \langle X_n, d_n \rangle: n \in \mathbb{N} \rangle$ is defined as follows. Let $$X_\omega = \{ x \in \prod_{n \in \mathbb{N}} X_n : \forall n, \  x(n) = d_n (x(n+1)) \},$$
and $d_{\omega,n}= \pi_n: X_\omega \rightarrow X_n$ be the restriction of the $n^{th}$ projection to $X_\omega$, so $d_{\omega,n}(x)=\pi_n(x)=x(n)$ for each $x \in X_\omega$.
\end{defn}

\noindent Observe that each $d_{\omega,n}$ is continuous, as the restriction of a continuous function. Observe also that for each $n$, $d_{\omega,n}= d_n \circ d_{\omega,n+1}$, since $$d_n \circ d_{\omega,n+1} (x)= d_n (x(n+1)) = x(n) = d_{\omega,n} (x).$$

\noindent We now give a property that characterises the inverse limit.

\begin{prop} \label{limit}
Suppose $\langle Y, \langle g_n: n\in \mathbb{N} \rangle \rangle$ is any pair such that $Y$ is a topological space, each $g_n: Y \rightarrow X_n$ is continuous, and for all $n$, $g_n = d_n \circ g_{n+1}$. Then there is a continuous function $g: Y \rightarrow X_\omega$ such that for all $n$, $g_n = d_{\omega, n } \circ g$.
\end{prop}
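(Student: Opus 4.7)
The plan is to define $g$ coordinatewise using the maps $g_n$, show that its image really lands in $X_\omega$, and then deduce continuity from the universal property of the Tychonoff product.

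First I would set $g: Y \to \prod_{n \in \mathbb{N}} X_n$ by the rule $g(y)(n) = g_n(y)$. The commutation requirement $d_{\omega,n} \circ g = g_n$ then forces this definition, since $d_{\omega,n}(g(y)) = g(y)(n)$, so uniqueness of $g$ comes for free. The real content is that (i) $g(y)$ lies in the subspace $X_\omega$, and (ii) the resulting map $Y \to X_\omega$ is continuous.

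For (i), I would check the defining relation of $X_\omega$ directly: for each $n$ and each $y \in Y$,
\[
d_n(g(y)(n+1)) = d_n(g_{n+1}(y)) = (d_n \circ g_{n+1})(y) = g_n(y) = g(y)(n),
\]
using exactly the hypothesis $g_n = d_n \circ g_{n+1}$. Hence $g$ corestricts to a map $Y \to X_\omega$, which I will continue to call $g$.

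For (ii), the cleanest argument is via the product topology. Viewing $g$ as a map $Y \to \prod_n X_n$, continuity is equivalent to continuity of each composite $\pi_n \circ g$, and by construction $\pi_n \circ g = g_n$ is continuous by assumption. So $g$ is continuous into the product, and since $X_\omega$ carries the subspace topology, the corestricted $g: Y \to X_\omega$ is continuous as well. I do not anticipate any real obstacle here; the only step that is more than a bookkeeping check is the verification that $g(y) \in X_\omega$, and that is an immediate rewriting of the hypothesis $g_n = d_n \circ g_{n+1}$.
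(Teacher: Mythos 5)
Your proposal is correct and follows essentially the same route as the paper: define $g$ coordinatewise by $g(y)(n)=g_n(y)$, verify membership in $X_\omega$ from $g_n = d_n\circ g_{n+1}$, and get continuity from the product structure. The only cosmetic difference is that you cite the universal property of the product for continuity where the paper checks preimages of subbasic open sets by hand, which is the same argument unwound.
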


\begin{proof}
Simply define $g: Y \rightarrow X_\omega, \  y \mapsto x$ where $x(n)=g_n(y)$. This is well-defined, because for each $n$ we have $x(n) = d_n (x(n+1))$:
$$x(n) = g_n(y) = d_n \circ g_{n+1} (y)= d_n ( g_{n+1} (y)) = d_n (x(n+1)).$$
We also have $g_n = d_{\omega, n } \circ g$, because
$$d_{\omega, n } \circ g (y)= d_{\omega, n } (x) = x(n) = g_n(y).$$
It only remains to show that $g$ is continuous. We show that the preimage under $g$ of each subbasic open set is open. Let $U= U_j \times \prod_{n \neq j} X_n$, where $U_j$ is open in $X_j$. Then, 
$$g^{-1}(U \cap X_\omega)= \{y \in Y: g(y) \in U\}= \{y \in Y: g_j(y) \in U_j\}=g_j^{-1}(U_j).$$
This is the continuous preimage of an open set, hence it is open.
\end{proof}

\noindent Let us make a few more easy observations.
\\ Firstly, if each $d_n$ is onto, then each $d_{\omega, n}$ is onto. Indeed, given $x_n \in X_n$, we can recursively find $x_i \in X_i$ for each $i>n$ such that $d_i(x_i)=x_{i-1}$, by surjectivity of the $d_i$. We can also define, for $i<n$, $x_i=d_{i+1}(x_{i+1})$. Define $x \in X_\omega$ by $x(n)=x_n$; then $d_{\omega, n}(x)=x(n)=x_n$.
\\

\noindent Also, if all of the spaces $X_n$ are compact Hausdorff, then $X_\omega$ is compact Hausdorff. Indeed, $\prod_{n \in \mathbb{N}} X_n$ is Hausdorff and compact by Tychonoff's theorem, so if we know that $X_\omega$ is a closed subspace, then it is Hausdorff and compact. It remains to see that $X_\omega$ is closed in $\prod_{n \in \mathbb{N}} X_n$. Well, $$X_\omega = \bigcap_{N \in \mathbb{N}} \{ x \in \prod_{n \in \mathbb{N}} X_n : \  x(N) = d_N (x(N+1)) \}= \bigcap_{N \in \mathbb{N}} \psi^{-1}(\Delta_{X_N \times X_N}),$$
where $\psi:\prod_{n \in \mathbb{N}} X_n \rightarrow X_N \times X_N, \ x \mapsto (\pi_N(x), d_N \circ \pi_{N+1}(x))=(x_N,d_N(x(N+1)))$ is continuous, since each component is continuous in $x$. Since $X_N$ is Hausdorff, the diagonal $\Delta_{X_N \times X_N}= \{(x,x): x \in X_N\}$ is closed in $X_N \times X_N$. Therefore each $\psi^{-1}(\Delta_{X_N \times X_N})$ is closed as the continuous preimage of a closed set. Hence $X_\omega$ is closed, as the intersection of closed sets.
\\

\begin{lem} \label{max}
If $\langle \langle g_n, \delta_n \mathbb{R} \rangle: n \in \mathbb{N} \rangle$ is a sequence of metrisasble $T_2$ compactifications of $\mathbb{R}$ such that for all $n$, $\delta_n \mathbb{R} \leq \delta_{n+1} \mathbb{R}$, then there exists a metrisable $T_2$ compactification $\delta_\omega \mathbb{R}$ of $\mathbb{R}$ such that for all $n$, $\delta_n \mathbb{R} \leq \delta_\omega \mathbb{R}$.
\end{lem}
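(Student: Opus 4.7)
My plan is to construct $\delta_\omega \mathbb{R}$ as a closure inside the inverse limit of the given chain, using exactly the machinery developed just above. First, since $\delta_n \mathbb{R} \leq \delta_{n+1} \mathbb{R}$ for each $n$, I would extract a continuous (automatically onto) map $d_n : \delta_{n+1} \mathbb{R} \to \delta_n \mathbb{R}$ satisfying $d_n \circ g_{n+1} = g_n$. These $d_n$ form an inverse system whose inverse limit $X_\omega \subseteq \prod_n \delta_n \mathbb{R}$ comes equipped with continuous projections $d_{\omega, n} : X_\omega \to \delta_n \mathbb{R}$.

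The coherence $g_n = d_n \circ g_{n+1}$ is precisely the hypothesis of Proposition \ref{limit} applied to $\langle g_n : n \in \mathbb{N} \rangle$, so I would obtain a continuous $g_\omega : \mathbb{R} \to X_\omega$ with $d_{\omega, n} \circ g_\omega = g_n$ for every $n$. I then set $\delta_\omega \mathbb{R} := \text{cl}^{X_\omega}(g_\omega(\mathbb{R}))$ and claim $\langle g_\omega, \delta_\omega \mathbb{R} \rangle$ is the desired compactification.

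The bulk of the remaining work is routine verification. Compactness and Hausdorffness of $\delta_\omega \mathbb{R}$ follow from the observation made just before this lemma (that $X_\omega$ is compact Hausdorff when each $\delta_n \mathbb{R}$ is) together with $\delta_\omega \mathbb{R}$ being closed in $X_\omega$. Injectivity of $g_\omega$ comes for free from that of $g_0$, via $d_{\omega, 0} \circ g_\omega = g_0$; continuity of $g_\omega$ is part of Proposition \ref{limit}; density of $g_\omega(\mathbb{R})$ in $\delta_\omega \mathbb{R}$ is built into the definition. For metrisability, I would apply the preceding lemma on countable products of metric spaces to $\prod_n \delta_n \mathbb{R}$ and take the subspace metric on $\delta_\omega \mathbb{R}$. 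Finally, to witness $\delta_n \mathbb{R} \leq \delta_\omega \mathbb{R}$, the restriction $L_n := d_{\omega, n}|_{\delta_\omega \mathbb{R}}$ is a continuous map satisfying $L_n \circ g_\omega = g_n$.

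The single step requiring actual thought is the continuity of $g_\omega^{-1}$. My approach is to note that $d_{\omega, 0}$ maps $g_\omega(\mathbb{R})$ into $g_0(\mathbb{R})$, yielding $g_\omega^{-1} = g_0^{-1} \circ d_{\omega, 0}|_{g_\omega(\mathbb{R})}$ on the image: a composition of continuous maps, exploiting that $g_0$ is already an embedding. No other real obstacle is anticipated; the rest is assembly from Proposition \ref{limit}, closures in a Hausdorff product, and the countable-product metrisability lemma.
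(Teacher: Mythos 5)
Your proposal is correct and follows essentially the same route as the paper: extract the onto bonding maps $d_n$, form the inverse limit, apply Proposition \ref{limit} to get $g_\omega$, take the closure of the image, and verify the compactification axioms using the countable-product metrisability lemma. The only (harmless) divergence is in checking continuity of $g_\omega^{-1}$: you write it as the composition $g_0^{-1} \circ d_{\omega,0}|_{g_\omega(\mathbb{R})}$, which is clean and avoids the paper's appeal to local compactness of $\mathbb{R}$ (the paper uses openness of $g_0(\mathbb{R})$ in $\delta_0\mathbb{R}$ to push open sets forward); both arguments work.
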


\begin{proof}
By assumption, for each $n$ there is an onto function $d_n: \delta_{n+1} \mathbb{R} \rightarrow \delta_n \mathbb{R}$ such that $d_n \circ g_{n+1} = g_n$. Let us take the inverse limit of the system $\langle \langle \delta_n \mathbb{R}, d_n \rangle: n \in \mathbb{N} \rangle$. Call it $\langle X_\omega, d_{\omega,n} \rangle$. By definition $X_\omega$ is a subspace of a countable product of the spaces $\delta_n \mathbb{R}$, and is therefore metrisable by metrisability of each of the $\delta_n \mathbb{R}$. We have remarked above that $X_\omega$ must be compact Hausdorff, since each individual space $\delta_n \mathbb(R)$ is. Since we have a pair $\langle \mathbb{R}, \langle g_n: n\in \mathbb{N} \rangle \rangle$ such that $\mathbb{R}$ is a topological space, each $g_n: \mathbb{R} \rightarrow \delta_n\mathbb{R}$ is continuous, and for all $n$, $g_n = d_n \circ g_{n+1}$, by Proposition \ref{limit} there is a continuous function $g: \mathbb{R} \rightarrow X_\omega$ such that for all $n$, $g_n = d_{\omega, n } \circ g$. Let $\delta_{\omega} \mathbb{R} = \text{cl}^{X_\omega}(g(\mathbb{R}))$. Then $\langle g, \delta_\omega \mathbb{R} \rangle$ is the desired compactification:

\begin{itemize}
    \item $\delta_\omega \mathbb{R}$ is compact $T_2$ and metrisable, since it is a closed subspace of the compact $T_2$ and metrisable space $X_\omega$;
    \item $g$ is injective since $g_0$ is injective;
    \item $g$ is continuous by assumption;
    \item Suppose $U$ is open in $\mathbb{R}$. We claim $g(U)$ is open in $g(\mathbb{R})$. Well, $d_{\omega, 0} \circ g(U) = g_0 (U)$ is open in $\delta_0 \mathbb{R}$. (It is open in $g_0 (\mathbb{R})$, which is in turn open in $\delta_0 \mathbb{R}$ as $\mathbb{R}$ is locally compact). Then, $d_{\omega, 0}^{-1} (g_0 (U))$ is open in $\delta_\omega \mathbb{R}$, and $g(U)= d_{\omega, 0}^{-1} (g_0 (U))\cap g(\mathbb{R})$ shows that $g(U)$ is open in $g(\mathbb{R})$. Altogether this shows that $g^{-1}$ is continuous;
    \item $\text{cl}^{\delta_{\omega} \mathbb{R}} (g(\mathbb{R})) = \delta_{\omega} \mathbb{R}$ holds.
    \item for all $n$, $\delta_n \mathbb{R} \leq \delta_\omega \mathbb{R}$. This is witnessed by the continuous functions $d_{\omega,n}: \delta_\omega \mathbb{R} \rightarrow \delta_n\mathbb{R}$. We have remarked that they are onto because the $d_n$ are onto; furthermore, for each $n$ we have $g_n = d_{\omega, n } \circ g$.
\end{itemize}
\end{proof}

\noindent We are almost ready to show that $\mathbb{R}$ has uncountably many (non-equivalent) $T_2$ compactifications. For this, let us recall Zorn's Lemma.

\begin{lem}[Zorn's Lemma]
Let $\mathfrak{A} = (A, \leq)$ be a nonempty poset in which every nonempty chain has an upper bound. Then $\mathfrak{A}$ has a maximal element.
\end{lem}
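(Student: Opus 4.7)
Since Zorn's Lemma is standardly taken as an axiom, equivalent over ZF to the Axiom of Choice, one typically does not give a proof of it inside a topology paper. If pressed to supply a proof, the plan is to derive it from the Axiom of Choice via transfinite recursion together with a cardinality argument, arguing by contradiction.

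First I would suppose that $\mathfrak{A}$ has no maximal element. Then for every chain $C \subseteq A$ the hypothesis furnishes an upper bound $u \in A$, and since $u$ is not maximal there exists $b \in A$ with $b > u$; in particular $b$ is a \emph{strict} upper bound of $C$. I would then apply the Axiom of Choice to select, for each chain $C \subseteq A$, some strict upper bound $f(C) \in A$ of $C$, thereby obtaining a choice function $f$.

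Next I would construct, by transfinite recursion on the ordinals, a sequence $(a_\alpha)$ of elements of $A$ indexed by all ordinals. Pick any $a_0 \in A$. Assuming $(a_\beta)_{\beta < \alpha}$ has already been defined and forms a strictly increasing chain, set $a_\alpha = f(\{ a_\beta : \beta < \alpha \})$, which by construction strictly dominates every $a_\beta$ with $\beta < \alpha$. A routine transfinite induction shows that the resulting sequence is strictly increasing, so in particular the map $\alpha \mapsto a_\alpha$ is injective.

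The main obstacle is then converting this construction into a contradiction. By Hartogs' Lemma there exists an ordinal $\kappa$ admitting no injection into the set $A$; yet restricting our construction to $\kappa$ yields exactly such an injection, which is the desired contradiction. (Equivalently, one may invoke the Burali-Forti paradox: the existence of the transfinite sequence above would force the proper class of all ordinals to be a set.) This contradiction forces us to abandon the assumption, so $\mathfrak{A}$ must possess a maximal element after all.
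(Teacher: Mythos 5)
The paper does not prove this statement at all: it records Zorn's Lemma as a known result (in effect, as an axiom equivalent to the Axiom of Choice) and immediately applies it to the poset of equivalence classes of metrisable compactifications. So there is nothing in the source to compare your argument against line by line. That said, your sketch is the standard and correct derivation of Zorn's Lemma from the Axiom of Choice: assume no maximal element, use AC to choose a strict upper bound $f(C)$ for every chain $C$ (this is legitimate since the collection of chains is a subset of $\mathcal{P}(A)$, hence a set), build a strictly increasing ordinal-indexed sequence by transfinite recursion, and contradict Hartogs' Lemma. The one point worth making explicit if you were to write this out in full is that the recursion must carry along the inductive hypothesis that $\{a_\beta : \beta < \alpha\}$ really is a chain at every stage (so that $f$ may be applied to it), and that the recursion along all ordinals is formalised via Replacement; you gesture at both, and neither is a gap in substance. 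Your approach buys self-containedness at the cost of set-theoretic machinery the paper deliberately avoids; the paper's choice to cite the lemma without proof is the conventional one in a topology report.
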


\begin{thm}
$\mathbb{R}$ has uncountably many $T_2$ compactifications.
\end{thm}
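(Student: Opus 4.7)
The plan is to argue by contradiction using Zorn's Lemma, playing Lemmas \ref{max} and \ref{nomax} off one another. I would suppose that $\mathbb{R}$ has only countably many equivalence classes of $T_2$ compactifications, and work inside the sub-poset $P$ consisting of (equivalence classes of) metrisable $T_2$ compactifications of $\mathbb{R}$ ordered by $\leq$. This $P$ is nonempty, since for instance $\gamma\mathbb{R}$ belongs to it, and under the contradictory hypothesis $P$ is countable.

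The main task is to verify the hypothesis of Zorn's Lemma on $P$: every nonempty chain in $P$ must have an upper bound in $P$. Given a chain $C \subseteq P$, which is countable by our assumption, I would enumerate $C$ as $\{b_0, b_1, \ldots\}$ and pass to the weakly increasing sequence $a_n := \max(b_0,\ldots,b_n)$, which exists since $C$ is linearly ordered and which is cofinal in $C$ because $b_k \leq a_k$ for each $k$. Lemma \ref{max} then applies directly to $(a_n)$ to produce a metrisable $T_2$ compactification $\delta_\omega \mathbb{R}$ dominating every $a_n$; by cofinality it dominates every element of $C$, and it lives in $P$ because Lemma \ref{max} explicitly guarantees metrisability.

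Zorn's Lemma would then yield a maximal element of $P$. But $\mathbb{R}$ is a non-compact Tychonoff space, so Lemma \ref{nomax} forbids the existence of any maximal metrisable $T_2$ compactification, a contradiction; hence the poset of $T_2$ compactifications of $\mathbb{R}$ must be uncountable. The main obstacle I anticipate is the chain-to-sequence reduction: one must carefully extract a cofinal weakly increasing sequence from an arbitrary countable chain rather than naively invoking Lemma \ref{max} on chains of arbitrary order type, and one must confirm that the resulting $\delta_\omega \mathbb{R}$ dominates \emph{all} of $C$ and not merely its cofinal subsequence. The rest of the argument is just plugging our countability hypothesis into the machinery already developed.
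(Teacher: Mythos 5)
Your proposal is correct and follows essentially the same route as the paper: contradiction via Zorn's Lemma on the (countable, under the hypothesis) poset of metrisable $T_2$ compactifications, with Lemma \ref{max} supplying upper bounds for chains and Lemma \ref{nomax} supplying the contradiction. Your running-maximum trick $a_n = \max(b_0,\ldots,b_n)$ is in fact a slightly cleaner way to reduce an arbitrary countable chain to a weakly increasing sequence than the paper's case split (finite versus infinite chains, plus the assumption of no maximal element), but it is the same argument in substance.
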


\begin{proof}
Suppose $\mathbb{R}$ has only countably many $T_2$ compactifications. In particular $\mathbb{R}$ has only countably many \textit{metrisable} $T_2$ compactifications. We may assume without loss that there are countably infinitely many of these. (If there are only finitely many metrisable $T_2$ compactifications, then certainly one of these is maximal among all the others; this contradicts Lemma \ref{nomax}.)
\\

\noindent Let the set of all metrisable $T_2$ compactifications of $\mathbb{R}$ be $\mathfrak{A}=\{\langle h_n, \delta_n \mathbb{R} \rangle: n \in \mathbb{N} \}$. (We write $\langle h'_n, \delta'_n \mathbb{R} \rangle$ for ease of notation, but we really mean its class $[\langle h'_n, \delta'_n \mathbb{R} \rangle ]$, of course.)
\\ This is a poset. We show that it has a maximal element, by checking that it satisfies the conditions of Zorn's Lemma. $\mathfrak{A}$ is nonempty, since it contains $\langle k,\gamma \mathbb{R} \rangle$. Suppose $\mathfrak{C} \subseteq \mathfrak{A}$ is a nonempty chain. We need to exhibit an upper bound for $\mathfrak{C}$. We split into two cases:

\begin{itemize}
    \item If $\mathfrak{C}$ has only finitely many elements, write these as $\langle h'_1, \delta'_1 \mathbb{R} \rangle  \leq \ldots \leq \langle h'_r, \delta'_r \mathbb{R} \rangle $. Then $ \delta'_r \mathbb{R} $ is a greatest element of the chain, hence certainly an upper bound.
    
    \item If $\mathfrak{C}$ has countably infinitely many elements, write $\mathfrak{C}=\{\langle h'_n, \delta'_n \mathbb{R} \rangle: n \in \mathbb{N} \}$. Let us assume without loss that $\mathfrak{C}$ has no maximal element. (A maximal element in a chain would also be a greatest element and hence an upper bound for the chain, so we would be done.) Note also that each nonempty finite subset $\mathfrak{C'}$ of $\mathfrak{C}$ is still a chain, and by the above case, $\mathfrak{C'}$ has a greatest element max$\{\mathfrak{C}'\}$. We now construct a sequence $\langle \langle h'_{n_i}, \delta'_{n_i} \mathbb{R} \rangle: i \in \mathbb{N} \rangle$ of compactifications in $\mathfrak{C'}$ such that for all $i$, $\delta'_{n_i} \mathbb{R} \leq \delta'_{n_{i+1}} \mathbb{R}$.
    \\
    
    Let $n_0=0$. $\langle h'_{n_0}, \delta'_{n_0} \mathbb{R} \rangle$ is not a maximal element of the chain, so there is $n_1 > n_0$ with $\delta'_{n_0}\mathbb{R}<\delta'_{n_1}\mathbb{R}$.
    \\ For $r >0$, at the $r^{th}$ stage consider the finite subchain $\mathfrak{C}'_r=\{\delta'_i \mathbb{R}: 0 \leq i \leq n_r\}$; max$\{\mathfrak{C}'_r\}$ is not a maximal element of $\mathfrak{C}$, so there is $n_{r+1}>n_r$ with max$\{\mathfrak{C}'_r\} < \delta'_{n_{r+1}}\mathbb{R}$.
    \\
    
    We have inductively defined a sequence $\langle \langle h'_{n_i}, \delta'_{n_i} \mathbb{R} \rangle: i \in \mathbb{N} \rangle$ such that for each $i$, $ \delta'_{n_i} \mathbb{R} \leq  \delta'_{n_{i+1}} \mathbb{R}$. Therefore Lemma \ref{max} applied to this sequence $\langle \langle h'_{n_i}, \delta'_{n_i} \mathbb{R} \rangle: i \in \mathbb{N} \rangle$ (now considered as a sequence of actual compactifications rather than classes of these) tells us that there exists a metrisable $T_2$ compactification $\delta_{\omega} \mathbb{R}$ such that for each $i$, $\delta'_{n_i} \mathbb{R} \leq \delta_{\omega} \mathbb{R}$.
    \\
    
    $\delta_{\omega} \mathbb{R}$ is an element of $\mathfrak{A}$; let us show that it is an upper bound for $\mathfrak{C}$.
    \\ Well, for each $r \in \mathbb{N}$ we have $n_r \geq r$ so $\delta'_r \mathbb{R}$ is among $\delta'_0 \mathbb{R}, \ldots, \delta'_{n_r} \mathbb{R}$. Therefore $$\delta'_r \mathbb{R} \leq \text{max} \{ \mathfrak{C}'_r \}< \delta'_{n_{r+1}} \mathbb{R} \leq \delta_{\omega} \mathbb{R},$$ as required.
\end{itemize}

\noindent We have now shown that $\mathfrak{A}$ satisfies the conditions of Zorn's Lemma, and so has a maximal element $\langle h_{\text{max}},\delta_{\text{max}} \mathbb{R} \rangle$. That is, $\langle h_{\text{max}},\delta_{\text{max}} \mathbb{R} \rangle$ is maximal among all metrisable $T_2$ compactifications of $\mathbb{R}$.
\\

\noindent This contradicts Lemma \ref{nomax}. Therefore $\mathbb{R}$ could not have only countably many $T_2$ compactifications!
\end{proof}

\section{Conclusion}

\noindent In Section 2, for the problem of finding a compactification of $\mathbb{R}$ to which the family $f_n(x)=\cos(x)$ extended continuously, we could have gone a different route by defining $\langle k, \gamma \mathbb{R} \rangle $ as follows. Take $$k: \mathbb{R} \rightarrow [-1,1] \times [-1,1], \  x \mapsto (\tanh(x),\cos(x)),$$ and let $\gamma \mathbb{R}$ be the closure of the image of $k$ in $[-1,1] \times [-1,1]$. Indeed, a bit of thought shows that if we have found a compactification $\langle k, \gamma \mathbb{R} \rangle $ onto which $f_1(x) = \cos(x)$ extends continuously, then for each $n \in \mathbb{Z}$, $f_n(x) = \cos(nx)$ will also extend continuously. 
\\

\noindent This relies on the fact that each $f_n(x)=\cos(nx)$ can be expanded as a polynomial $T_n$ in $\cos(x)$: $$\cos(nx) = T_n (\cos(x)),$$ where, in fact, $T_n$ is the $n^{th}$ Chebyshev polynomial.
\\ Therefore, if we have a compactification $\langle k, \gamma \mathbb{R} \rangle $ and a continuous function $\gamma f_1 : \gamma  \mathbb{R} \rightarrow  \mathbb{R}$ such that $$\gamma_1 f \circ k = f_1,$$ then this would also yield, for each $n \in \mathbb{Z}$, a continuous function $\gamma f_n : \gamma  \mathbb{R} \rightarrow  \mathbb{R}$ such that $$\gamma f_n \circ k = f_n.$$ Simply take $\gamma f_n= T_n \circ \gamma f_1$:
$$\gamma f_n \circ k= T_n \circ \gamma f_1 \circ k = T_n \circ f_1 = f_n.$$

\noindent The advantage of this approach is that we can instantly see this space is metrisable, as a subspace of $[-1,1] \times [-1,1]$. This means we do not need to rely on the result that a countable product of metric spaces is metrisable.
\\

\noindent Notice also that we did not prove that our choice of $\langle k, \gamma \mathbb{R} \rangle $ was smallest among all compactifications to which the family $f_n(x)=\cos(nx)$ extends continuously -- this was not necessary for us to show that $\langle k, \gamma \mathbb{R} \rangle $ is distinct from the one-point, two-point, and Stone-$\check{\text{C}}$ech compactification.
\newpage


\begin{thebibliography}{100}
\raggedright

\bibitem{Engelking89}
Engelking, R. (1989) \textit{General Topology}. Sigma Series in Pure Mathematics, Vol. 6, revised and completed ed. Heldermann Verlag, Berlin.

\bibitem{Folland99}
Folland, G. (1999) \textit{Real Analysis: Modern Techniques and their Applications}. Pure and Applied Mathematics: A Wiley-Interscience Series of Texts, Monographs and Tracts, second edition. New York, John Wiley $\&$ Sons, Inc.


\end{thebibliography}
\end{document}